\newcommand{\stkout}[1]{\ifmmode\text{\sout{\ensuremath{#1}}}\else\sout{#1}\fi}
\let\epsilon=\varepsilon
\newcommand{\RR}{{\mathbb R}}
\newcommand{\FF}{{\mathcal F}}
\newcommand{\NN}{{\mathbb N}}
\newcommand{\CC}{{\mathbb C}}
\newcommand{\CI}{{{\mathcal C}^\infty}}
\newcommand{\CIb}{{{\mathcal C}^\infty_b}}
\newcommand{\CIc}{{{\mathcal C}^\infty_{\rm{c}}}}
\newtheorem{thm}{Theorem}
\newtheorem{prop}{Proposition}
\newtheorem{lem}{Lemma}
\numberwithin{equation}{section}
\DeclareMathOperator{\Det}{det}
\DeclareMathOperator{\dist}{dist}
\DeclareMathOperator{\comp}{comp}
\let\Im=\Imag
\let\Re=\Real
\DeclareMathOperator{\supp}{supp}
\DeclareMathOperator{\tr}{tr}
\begin{document}


\title{Resonances as Viscosity Limits for Exponentially Decaying Potentials} 



\author{Haoren Xiong}
\email{xiong@math.berkeley.edu}
\affiliation{Department of Mathematics, University of California,
Berkeley, CA 94720, USA}


\date{\today}

\begin{abstract}
We show that the complex absorbing potential (CAP) method for computing scattering resonances applies to the case of exponentially decaying potentials. That means that the eigenvalues of $-\Delta + V - i\epsilon x^2$, $|V(x)|\leq C e^{-2\gamma |x|}$ converge, as $ \epsilon\to 0+ $, to the poles of the meromorphic continuation of $ ( -\Delta + V -\lambda^2 )^{-1} $ uniformly on compact subsets of $\Re\lambda>0$, $\Im\lambda>-\gamma$, $\arg\lambda > -\pi/8$.
\end{abstract}

\pacs{}

\maketitle 

\section{Introduction}
\label{introduction}
The complex absorbing potential (CAP) method has been used as a computational tool for finding scattering resonances -- see Riss--Meyer \cite{RiMe} and Seideman--Miller \cite{semi} for an early treatment and Jagau et al \cite{Jag} for some recent developments. For potentials $ V \in L^\infty_{\comp} $ the method was justified by Zworski \cite{Zw-vis}. In \cite{xiong2020} the author extended it to potentials which are dilation analytic near infinity. In this paper we show that the CAP method is also valid for potentials which are exponentially decaying. While the key component of \cite{Zw-vis} and \cite{xiong2020} was the method of complex scaling (see Hunziker \cite{hunziker1986}, Sj{\"o}strand--Zworski \cite{sjostrand1991} for an account and references), here we use complex scaling on the Fourier transform side following Nakamura \cite{nakamura1990} and Kameoka--Nakamura \cite{Nakamura}.

Thus, we consider the Schr\"odinger operator $ P := -\Delta + V $ acting on $L^2(\RR^n)$ whose potential is exponentially \replaced{decaying}{decreasing}, this means that there exist $C,\gamma>0$ such that
\begin{equation}\label{V decay}
    |V(x)| \leq C e^{-2\gamma |x|}.
\end{equation}
Let $ R_V(\lambda) = ( P - \lambda^2 )^{-1} $ be the resolvent of $P$, initially defined for $\Im\lambda>0$. The exponentially weighted resolvent $ \sqrt{V} R_V(\lambda) \sqrt{V} $ can be meromorphically continued to the strip $\Im\lambda > -\gamma$, see Froese \cite{froese}, Gannot \cite{gannot} and a review in \S\ref{resonances}. Resonances of $P$ are the poles in this meromorphic continuation.

We now introduce a {\em regularized} operator,
\begin{equation}
\label{eqn:Peps}
  P_\epsilon := - \Delta - i \epsilon x^2 + V, \ \ \epsilon > 0 .
\end{equation}
(We write $ x^2 := x_1^2 + \cdots + x_n^2 $.) It is easy to see, with details reviewed in \S\ref{eigenvalues}, that $ P_\epsilon $
is a non-normal unbounded operator on $ L^2 ( \RR^n ) $ with a discrete spectrum. When $ V\equiv 0 $, $P_\epsilon$ is reduced to the rescaled Davies harmonic oscillator -- see \S\ref{davies}, whose spectrum is given by
\[
     \sqrt{\epsilon}\,e^{-i\pi/4}(2|\alpha|+n),\quad\alpha\in\NN_0^n,\quad |\alpha|:=\alpha_1 + \cdots + \alpha_n,
\]
where $\NN_0$ denotes the set of nonnegative integers.
Thus we will restrict our attentions to $\arg z > -\pi/4$. Suppose that
\begin{equation}
\label{PepsEigenvalues}
    \sigma(P_\epsilon) \cap \CC\setminus e^{-i\pi/4}[0,\infty) = \{\lambda_j(\epsilon)^2 \}_{j=1}^\infty,\quad -\pi/8 < \arg\lambda_j(\epsilon) < 7\pi/8.
\end{equation}
Zworski \cite{Zw-vis} proved that resonances can be defined as the limit points of $\{ \lambda_j(\epsilon) \}_{j=1}^\infty$ as $\epsilon\to 0+$, in the case of compactly supported potentials. We generalize this result to the case of exponentially decaying potentials. More precisely, we have

\begin{thm}
\label{thm:1}
For any $ 0 < a'< a< b $ and $\gamma'<\gamma$ such that the rectangle
\begin{equation}
\label{rectangle Omega}
    \Omega := (a',a) + i(-\gamma',b)  \Subset \{\lambda\in\CC: -\pi/8<\arg\lambda<7\pi/8\},
\end{equation}
we have, uniformly on $\Omega$,
\[   \lambda_j ( \epsilon ) \to \lambda_j , \quad \epsilon \to 0 + , \]
where $ \lambda_j $ are the resonances of $ P $.
\end{thm}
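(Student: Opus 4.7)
The proof should follow the general strategy of \cite{Zw-vis, xiong2020}, but with the (physical-side) exterior complex scaling replaced by its Fourier-side variant of Nakamura \cite{nakamura1990} and Kameoka--Nakamura \cite{Nakamura}, which is precisely what \eqref{V decay} enables. The plan is to exhibit both the eigenvalues $\lambda_j(\epsilon)^2$ and the squared resonances $\lambda_j^2$ as discrete spectra of two closely related non-selfadjoint operators, and then apply a Gohberg--Sigal / Rouch\'e-type argument to obtain the claimed convergence.

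\emph{Set-up of the scaled operators.} Fix $\theta = i\beta$ with $0 < \beta < \pi/8$ small. Following \cite{Nakamura}, introduce a bounded analytic family $U_\theta$ on $L^2(\RR^n)$ acting by a localized dilation $\xi \mapsto e^\theta \xi$ on the Fourier side outside a ball. Under this transformation $-\Delta$ becomes an operator whose essential spectrum lies on $e^{-2i\beta}[0,\infty)$. The hypothesis \eqref{V decay} guarantees that $\widehat V$ extends holomorphically to the strip $|\Im\xi|<\gamma$, and hence $V_\theta := U_\theta V U_\theta^{-1}$ is a bounded, $-\Delta$-compact perturbation. Consequently $P_\theta := U_\theta P U_\theta^{-1}$ has purely discrete spectrum in $\{\arg z > -2\beta\}\cap\{\Im z<0\}$, coinciding, by the construction recalled in \S\ref{resonances}, with the squares of the resonances of $P$ lying in $\Omega$. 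The CAP term $-i\epsilon x^2$ is a second-order differential operator in $\xi$, so $P_\epsilon^\theta := U_\theta P_\epsilon U_\theta^{-1}$ is closed with compact resolvent; a standard deformation argument in $\theta$ shows that its spectrum in $\{\lambda^2 : \lambda\in\Omega\}$ equals $\{\lambda_j(\epsilon)^2\}$.

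\emph{Convergence via a resolvent identity.} For $z \in \{\lambda^2 : \lambda\in\Omega\}\setminus\sigma(P_\theta)$, write
\[
    (P_\epsilon^\theta - z)^{-1} - (P_\theta - z)^{-1} = -(P_\epsilon^\theta - z)^{-1}\,(-i\epsilon x^2)\,(P_\theta - z)^{-1}.
\]
Combining ellipticity-based bounds of the form $\|x^2(P_\theta - z)^{-1}\|\leq C_K$ on compacts $K$ with uniform-in-$\epsilon$ bounds on $(P_\epsilon^\theta - z)^{-1}$ gives $O(\epsilon)$ convergence of the resolvents uniformly on compacts avoiding $\sigma(P_\theta)$. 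A standard Gohberg--Sigal argument applied to the analytic family $z \mapsto P_\epsilon^\theta - z$ then yields $\lambda_j(\epsilon) \to \lambda_j$ with multiplicities respected, uniformly on $\Omega$.

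\emph{Main obstacle.} The principal difficulty is that after Fourier-side scaling the perturbation $-i\epsilon x^2$ acts as $i\epsilon \Delta_\xi$, which is of the same order as the leading part of $P_\theta$, so naive relative boundedness with small constant fails. To turn this into a genuine $O(\epsilon)$ perturbation one must exploit the ellipticity of $P_\theta - z$ in the sector $\{\arg z > -2\beta\}$ together with semiclassical/weighted estimates that allow $\epsilon x^2$ to be controlled by the sectorial form associated with $P_\theta$, uniformly on compact subsets of $\Omega^2$ at a positive distance from the discrete spectrum. A parallel technical point is deformation invariance: because $-i\epsilon x^2$ modifies the behaviour of $P_\epsilon$ at infinity, the standard argument comparing $\sigma(P_\epsilon)$ with $\sigma(P_\epsilon^\theta)$ has to be carried out with care to ensure no eigenvalues are lost or gained as $\theta$ is rotated from $0$ to $i\beta$.
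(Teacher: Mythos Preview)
Your outline has a genuine gap at the convergence step. You claim ellipticity-based bounds of the form $\|x^2(P_\theta - z)^{-1}\|\leq C_K$, but after Fourier-side deformation $P_\theta$ is, up to a compact perturbation coming from $V_\theta$, multiplication by $\phi_\theta(\xi)^2$; its resolvent is bounded on $L^2$ but gains no $\xi$-regularity, so $x^2(P_\theta - z)^{-1}$ (which on the Fourier side is a second-order differential operator in $\xi$ applied to that resolvent) is unbounded. The alternative form of the resolvent identity, putting $W_\epsilon^\theta$ next to $(P_\epsilon^\theta - z)^{-1}$, does not help either: semiclassically with $h=\sqrt\epsilon$ the CAP term is $(hD_\xi)^2$ and the ellipticity of $P_\epsilon^\theta$ in $S(1+\xi^2+{\xi^*}^2)$ only returns an $O(1)$ bound, not $O(\epsilon)$. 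So the ``main obstacle'' you flag is real, and the fix you sketch (``ellipticity of $P_\theta - z$'') does not actually supply the missing regularity.

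The paper circumvents this by \emph{not} deforming the full operator. It works instead with the Birman--Schwinger families $I+\sqrt{V}R_0(\lambda)\sqrt{V}$ and $I+\sqrt{V}R_\epsilon(\lambda)\sqrt{V}$, where $R_\epsilon=(-\Delta-i\epsilon x^2-\lambda^2)^{-1}$ is the \emph{free} Davies resolvent. The Fourier-side Nakamura deformation is applied only to $-\Delta-i\epsilon x^2$, yielding the key uniform weighted bound $\|e^{-\gamma|x|}R_\epsilon(\lambda)e^{-\gamma|x|}\|\leq C$ on $\Omega$ (Lemma~\ref{lem:weighted Davies resolvent estimate}). The comparison $\sqrt{V}R_\epsilon\sqrt{V}-\sqrt{V}R_0\sqrt{V}$ is then split by a spatial cutoff $\chi_R$: the tails $(1-\chi_R)\sqrt{V}\cdots$ are small in $R$ uniformly in $\epsilon$ by the weighted bound and the exponential decay of $V$, while the compactly supported middle piece $\chi_R(R_\epsilon-R_0)\chi_R$ is $O(\epsilon)$ by the physical-side complex scaling estimate already proved in \cite{Zw-vis}. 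Gohberg--Sigal--Rouch\'e then finishes the argument. In short, the exponential decay of $V$ supplies the weights that absorb the lack of $x$-confinement of $(P_\theta-z)^{-1}$; your direct resolvent comparison tries to do without those weights and that is where it breaks.
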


\medskip
\noindent
{\bf Notation.}
We use the following notation: $ f =  \mathcal O_\ell (
 g   )_H $ means that
$ \|f \|_H  \leq C_\ell  g $ where the norm (or any seminorm) is in the
space $ H$, and the constant $ C_\ell  $ depends on $ \ell $. When either
$ \ell $ or
$ H $ are absent then the constant is universal or the estimate is
scalar, respectively. When $ G = \mathcal O_\ell ( g ) : {H_1\to H_2 } $ then
the operator $ G : H_1  \to H_2 $ has its norm bounded by $ C_\ell g $.
Also when no confusion is likely to result, we denote the operator
$ f \mapsto g f $ where $ g $ is a function by $ g $.

\section{meromorphic continuation}
\label{resonances}

In this section we will introduce a meromorphic continuation of the weighted resolvent $ \sqrt{V} R_V(\lambda) \sqrt{V} $ from $\Im\lambda>0$ to the strip $\Im\lambda>-\gamma$ under the assumption \eqref{V decay}. As in \cite{froese}, we define the resonances of $P$ as the poles of this meromorphic continuation, with agreement of multiplicities. For a detailed presentation, we refer to \cite{froese}.

Let $R_0(\lambda):=(-\Delta-\lambda^2)^{-1}$ be the free resolvent. For $\Im\lambda>0$, the resolvent equation
\[ R_0(\lambda) - R_V(\lambda) - R_V(\lambda) V R_0(\lambda) = 0 \]
implies
\[ ( I - \sqrt{V} R_V(\lambda)\sqrt{V} ) ( I + \sqrt{V} R_0(\lambda) \sqrt{V} ) = I. \]
Since $ R_0(\lambda) = \mathcal O(|\Im\lambda|^{-1}):L^2 \to L^2 $, then for $\Im\lambda$ large, $ I + \sqrt{V} R_0(\lambda)\sqrt{V} $ is invertible by a Neumann series argument and
\begin{equation}
\label{eqn:meroCont}
    I - \sqrt{V} R_V(\lambda)\sqrt{V} = ( I + \sqrt{V} R_0(\lambda) \sqrt{V} )^{-1}.
\end{equation}
We will show that the right side of $\eqref{eqn:meroCont}$ has a meromorphic continuation. For that, we recall some bounds of the free resolvent with exponential weights, see \cite{gannot} for details, to prove the following lemma:

\begin{lem}
\label{lem:meroCont}
For any $a>0$ and $\gamma'<\gamma$,
\[ \lambda \mapsto ( I + \sqrt{V}R_0(\lambda)\sqrt{V} )^{-1},\quad \Re\lambda > a,\ \Im\lambda > -\gamma', \]
is a meromorphic family of operators on $L^2(\RR^n)$ with poles of finite rank.
\end{lem}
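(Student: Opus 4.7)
My plan is to apply the analytic Fredholm theorem to the operator--valued function $\lambda \mapsto I + K(\lambda)$, where $K(\lambda) := \sqrt{V}R_0(\lambda)\sqrt{V}$, on the connected open region $\Omega' := \{\lambda\in\CC : \Re\lambda > a,\ \Im\lambda > -\gamma'\}$. Three ingredients are needed: (i) holomorphy of $\lambda\mapsto K(\lambda)$ as a bounded operator on $L^2(\RR^n)$ for $\lambda\in\Omega'$; (ii) compactness of $K(\lambda)$ on $L^2(\RR^n)$ for each such $\lambda$; and (iii) invertibility of $I + K(\lambda_0)$ at some $\lambda_0\in\Omega'$.

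The key input for (i) is the exponentially weighted resolvent estimate recalled from \cite{gannot,froese}: for any $\gamma'<\gamma$, the map
\[
   \lambda \mapsto e^{-\gamma'\langle x\rangle}R_0(\lambda)e^{-\gamma'\langle x\rangle},\quad \Im\lambda>0,
\]
continues holomorphically to $\{\Re\lambda>a,\ \Im\lambda>-\gamma'\}$ as a bounded operator on $L^2(\RR^n)$. (The restriction $\Re\lambda>a>0$ avoids the threshold singularity at $\lambda=0$ in low or even dimensions.) Using $|\sqrt{V(x)}|\leq C^{1/2}e^{-\gamma|x|}$ with $\gamma>\gamma'$, we factor $\sqrt{V}(x) = w(x)\,e^{-\gamma'\langle x\rangle}$ with $w\in L^\infty(\RR^n)$, so that
\[
  K(\lambda) \;=\; w\cdot\bigl[e^{-\gamma'\langle x\rangle}R_0(\lambda)e^{-\gamma'\langle x\rangle}\bigr]\cdot w
\]
is holomorphic on $\Omega'$ as a map $L^2\to L^2$.

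For (ii), $\sqrt{V}$ is an $L^\infty$ multiplier decaying at infinity; it is thus the norm limit of compactly supported multipliers, and via Rellich--Kondrachov applied to $R_0(\lambda):L^2\to H^2_{\loc}$ (valid at points of holomorphy) it defines a compact operator $H^2\to L^2$. Sandwiching, $K(\lambda)$ is compact on $L^2$ for every $\lambda\in\Omega'$. For (iii), the crude bound $\|R_0(\lambda)\|_{L^2\to L^2}\leq (\Im\lambda)^{-1}$ for $\Im\lambda>0$ gives $\|K(\lambda)\|\to 0$ as $\Im\lambda\to+\infty$; a Neumann series then yields invertibility of $I+K(\lambda)$ high up in $\Omega'$.

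With (i)--(iii) in hand, the analytic Fredholm theorem delivers meromorphy of $(I+K(\lambda))^{-1}$ on $\Omega'$ with finite-rank principal parts at the poles, which is the claim. The substantive step is (i): extending the weighted free resolvent across the continuous spectrum of $-\Delta$ requires the stationary-phase / limiting absorption analysis of the explicit free-resolvent kernel in exponentially weighted spaces, but this is imported from \cite{gannot,froese}, so the remaining work is the standard Fredholm bookkeeping.
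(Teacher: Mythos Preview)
Your proposal is correct and follows essentially the same route as the paper: factor $\sqrt{V}R_0(\lambda)\sqrt{V}$ through the exponentially weighted free resolvent (cited from \cite{gannot,froese}), verify that this yields an analytic family of compact operators on the region $\{\Re\lambda>a,\ \Im\lambda>-\gamma'\}$, check invertibility of $I+K(\lambda)$ for $\Im\lambda\gg 1$ via the crude resolvent bound, and conclude by analytic Fredholm theory. The only place worth tightening is your step~(ii): the ``sandwiching'' needs the weighted resolvent $e^{-\gamma'\langle x\rangle}R_0(\lambda)e^{-\gamma'\langle x\rangle}$ to map $L^2\to H^2$ (not just $L^2\to L^2$ as you establish in~(i)), which is exactly what the paper imports from the derivative estimates \eqref{weighted R0 bounds} in \cite{gannot}; your phrase ``$R_0(\lambda):L^2\to H^2_{\loc}$'' is slightly off after analytic continuation, but the intended argument goes through once you use the weighted $H^2$ bound and the compactness of $w=\sqrt{V}e^{\gamma'\langle x\rangle}:H^2\to L^2$.
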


\begin{proof}
Choose $\varphi\in \CI(\RR^n)$ satisfying $\varphi(x)=|x|$ for large $|x|$, it is well known that for each $c>0$, the weighted resolvent:
\[ e^{-c\varphi}R_0(\lambda)e^{-c\varphi}: L^2(\RR^n) \to L^2(\RR^n) \]
extends analytically across $\Re\lambda>0$ to the strip $\Im\lambda>-c$, see \cite[\S 1]{gannot} and references given there. Moreover, Gannot \cite[\S 1]{gannot} proved that for any $a,c,\epsilon>0$ and $\alpha\in\NN^n,\ |\alpha|\leq 2$ there exists $C_\alpha=C_\alpha(a,c,\epsilon)$ such that
\begin{equation}
\label{weighted R0 bounds}
    \| D^\alpha (e^{-c\varphi}R_0(\lambda)e^{-c\varphi}) \|_{L^2\to L^2} \leq C_\alpha |\lambda|^{|\alpha|-1},\quad\textrm{for }\Im\lambda > -c+\epsilon,\ \Re\lambda > a.
\end{equation}
In particular, for $\Re\lambda > a$ and $\Im\lambda > -\gamma'$,
\[ \lambda\mapsto e^{-\gamma'\varphi}R_0(\lambda)e^{-\gamma'\varphi}
\]
is an analytic family of operators $L^2\to H^2$. Since $\lim_{|x|\to\infty}\sqrt{V(x)}e^{\gamma'\varphi(x)} = 0$ by \eqref{V decay}, it is easy to see that $\sqrt{V}e^{\gamma'\varphi}:H^2 \to L^2$ is compact. Hence,
\[ \lambda \mapsto \sqrt{V}R_0(\lambda)\sqrt{V} = \sqrt{V}e^{\gamma'\varphi} (e^{-\gamma'\varphi}R_0(\lambda)e^{-\gamma'\varphi}) \sqrt{V}e^{\gamma'\varphi} \]
is an analytic family of compact operators $L^2\to L^2$ for $\Re\lambda > a,\ \Im\lambda > -\gamma'$. Recalling that $ I + \sqrt{V}R_0(\lambda)\sqrt{V} $ is invertible for $\Im\lambda\gg 1$, then by the analytic Fredholm theory -- see \cite[\S C.4]{res}, $\lambda\mapsto (I+\sqrt{V}R_0(\lambda)\sqrt{V})^{-1}$ is a meromorphic family of operators in the same range of $\lambda$.
\end{proof}

From now on, we identify the resonances $\lambda_j$, in $\Omega$ given in \eqref{rectangle Omega}, with the poles of $(I+\sqrt{V}R_0(\lambda)\sqrt{V})^{-1}$, with agreement of multiplicities. More precisely, the multiplicity of resonance $\lambda$ is given by
\begin{equation}
\label{resonance mult}
    m(\lambda) := \frac{1}{2\pi i} \tr \oint_\lambda ( I + \sqrt{V}R_0(\zeta)\sqrt{V} )^{-1} \partial_\zeta (\sqrt{V}R_0(\zeta)\sqrt{V})\,d\zeta,
\end{equation}
where the integral is over a positively oriented circle enclosing $\lambda$ and containing no poles other than $\lambda$.


%
%
\section{resolvent estimates for the Davies harmonic oscillator}
\label{davies}

The operator $ H_c := -\Delta + cx^2,\ -\pi < \arg c \leq 0 $, was used by Davies \cite{Dav} to illustrate properties of non-normal differential operators. We recall some known facts about $H_c$ and its resolvent. As established in \cite{Dav}, $H_c$ is an unbounded operator on $L^2(\RR^n)$ with the discrete spectrum given by
\begin{equation}
\label{Davies spectrum}
    \sigma(H_c) =  \{ c^{1/2}(n+2|\alpha|) : \alpha\in\NN_0^n \}.
\end{equation}
In particular $\sigma( H_{-i\epsilon} )\subset e^{-i\pi/4}[0,\infty)$, then one can study the resolvent of $ H_{-i\epsilon} $ outside $e^{-i\pi/4}[0,\infty)$. Unlike the normal operators, there does not exist any constant $C$ such that $ \| (-\Delta - i\epsilon x^2 - z)^{-1} \|_{L^2\to L^2} \leq C \dist( z,e^{-i\pi/4}[0,\infty) )^{-1} $. Instead, according to Hitrik--Sj\"ostrand--Viola \cite{HSV}, \cite[\S 3]{Zw-vis} and references given there, for $\Omega\Subset\{z : -\pi/2 < \arg z < 0\}\setminus e^{-i\pi/4}[0,\infty) $, there exists $ C = C(\Omega) $ such that
\begin{equation}
\label{bad Davies resolvent bound}
    \frac{1}{C} e^{\epsilon^{-\frac12}/C} \leq \| (-\Delta - i\epsilon x^2 - z)^{-1} \|_{L^2\to L^2} \leq C e^{C\epsilon^{-\frac12}},\quad z\in\Omega.
\end{equation}

In this section we will show how exponential weights dramatically improve the bound \eqref{bad Davies resolvent bound} for $(-\Delta-i\epsilon x^2 -\lambda^2)^{-1}$ in the rectangle $\Omega$ given by \eqref{rectangle Omega}, which will be crucial in the proof of Theorem \ref{thm:1}.

First, note that $ -\Delta_x - i\epsilon x^2 = \FF^{-1} ( \xi^2 + i\epsilon \Delta_\xi ) \FF $, where $\FF$ denotes the Fourier transform $\FF u(\xi) = \hat{u}(\xi) = (2\pi)^{-n/2} \int e^{-ix\cdot\xi}u(x)\,dx $. Inspired by \cite{nakamura1990} and \cite{Nakamura}, we introduce a family of spectral deformations in the Fourier space as follows.

For any fixed $\Omega$ given in \eqref{rectangle Omega}, we choose $ \rho \in \CI([0,\infty);\RR) $ with $\rho\equiv 0$ near $0$ and $\rho(t)\equiv 1$ for $t\gg 1$ such that
\begin{equation}
\label{eqn:rho}
    0\leq \rho'(t) < \gamma^{-1}\tan\frac{\pi}{8},\ \forall\, t\geq 0;\quad \Omega \Subset \{x+iy : x > 0,\ y > -\gamma\rho(x)\},
\end{equation}
and define the map
\begin{equation}
\label{eqn:psi}
    \psi : \RR^n \to \RR^n,\quad \psi(\xi)=|\xi|^{-1}\rho(|\xi|)\,\xi,
\end{equation}
 then $\psi$ is smooth with the Jacobian:
\begin{equation}
\label{eqn:Dpsi}
    D\psi (\xi) = |\xi|^{-1}{\rho(|\xi|)} I + (|\xi|^{-2}\rho'(|\xi|) - |\xi|^{-3}\rho(|\xi|))\, \xi\cdot\xi^T.
\end{equation}
Let $A$ be an orthogonal matrix with $n$-th column $|\xi|^{-1} \xi$, then we have
\begin{equation}
\label{eqn:diag Dpsi}
    A^T D\psi (\xi)\, A = \textrm{diag}[\, |\xi|^{-1}{\rho(|\xi|)},\cdots,|\xi|^{-1}{\rho(|\xi|)},\,\rho'(|\xi|) \,] .
\end{equation}
For $\theta\in\RR$, we consider a family of deformations:
\begin{equation}
\label{eqn:phitheta}
    \phi_\theta (\xi) = \xi + \theta\psi(\xi),
\end{equation}
and the corresponding unitary operators $U_\theta,\ \theta\in\RR$ defined by
\begin{equation}
\label{eqn:Utheta}
    U_\theta u(\xi) := (\Det D\phi_\theta(\xi))^{\frac12}u(\phi_\theta(\xi)).
\end{equation}
Using \eqref{eqn:diag Dpsi}, we can compute $\Det D\phi_\theta(\xi)$ explicitly, i.e.
\begin{equation}
\label{eqn:Jtheta}
    J_\theta(\xi) \equiv \Det D\phi_\theta(\xi) = \Det ( I + \theta D\psi(\xi) ) = ( 1 + \theta\rho'(|\xi|)\,)\, ( 1 + \theta |\xi|^{-1}{\rho(|\xi|)}\, )^{n-1} ,
\end{equation}
then by \eqref{eqn:rho}, $U_\theta$ is invertible as $\Det D\phi_\theta(\xi)\neq 0$ for $\theta\in\RR$, $|\theta|<\gamma$, the inverse is given by
\begin{equation}
\label{eqn:Utheta Inverse}
    U_\theta^{-1} v(\xi) = (\Det D\phi_\theta (\phi_\theta^{-1}(\xi)))^{-\frac12} v (\phi_\theta^{-1}(\xi)).
\end{equation}

Now we consider the deformed operators of $ \xi^2 + i\epsilon\Delta_\xi $:
\begin{equation}
\label{eqn:Qepstheta}
\begin{split}
    Q_{\epsilon,\theta} & := U_\theta ( \xi^2 + i\epsilon\Delta_\xi ) U_\theta^{-1}  \\
        & = \phi_\theta(\xi)^2 - i\epsilon J_\theta(\xi)^{-\frac12} D_{\xi_l} J^{lj}(\xi) J_\theta(\xi) J^{kj}(\xi) D_{\xi_k} J_\theta(\xi)^{-\frac12}
\end{split}
\end{equation}
where $D_{\xi_k}=-i\partial_{\xi_k}$, $J_\theta(\xi)=\Det D\phi_\theta (\xi)$, $ J^{lj}(\xi) = [D\phi_\theta(\xi)^{-1}]_{jl}$. To extend $Q_{\epsilon,\theta}$ to $\theta\in\CC$, we define
\begin{equation}
\label{Dgamma}
    D_\gamma := \{\theta\in\CC : |\Re\theta| + |\Im\theta| < \gamma \}.
\end{equation}
In view of \eqref{eqn:rho} and \eqref{eqn:Jtheta}, $D\phi_\theta^{-1}$ and $ \Det D\phi_\theta $ extend analytically to $\theta\in D_\gamma$. Therefore, we obtain that $Q_{\epsilon,\theta}$, given by the second equation in \eqref{eqn:Qepstheta}, extends analytically to $\theta\in D_\gamma$.

Then we introduce some preliminary results about the spectrum of $Q_{\epsilon,\theta}$ :
\begin{prop}
\label{prop:Spec_Qepstheta}
There exists constant $ \epsilon_0 = \epsilon_0 (\Omega,\gamma) $ such that for all $0< \epsilon < \epsilon_0$ and $\theta\in D_\gamma$,
\[  \sigma (Q_{\epsilon,\theta}) \cap \{ z\in\CC : |z| > 1, \pi/2 < \arg z < \pi \} = \emptyset.
\]
\end{prop}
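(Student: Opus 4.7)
My plan is to exploit the fact that the principal symbol $\phi_\theta(\xi)^2$ of $Q_{\epsilon,\theta}$ takes values in a closed proper subsector of the open right half-plane, uniformly in $\xi\in\RR^n$ and $\theta\in D_\gamma$. This keeps the symbol a positive distance from the forbidden region $F:=\{z\in\CC:|z|>1,\ \pi/2<\arg z<\pi\}$, after which the differential piece carrying $i\epsilon$ in \eqref{eqn:Qepstheta} can be handled as a small perturbation for $\epsilon$ small.

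First I would check in which sector $\phi_\theta(\xi)^2$ lies. Writing $r(|\xi|):=\rho(|\xi|)/|\xi|$ (with $r(0):=0$), one has $\phi_\theta(\xi) = (1+\theta\, r(|\xi|))\,\xi$, and \eqref{eqn:rho} together with $\rho\equiv 0$ near $0$ yields $r(|\xi|)<\gamma^{-1}\tan(\pi/8)$ for every $\xi$. For $\theta\in D_\gamma$ this gives $|\theta r|<\tan(\pi/8)$, so $1+\theta r$ lies in the open disk $D(1,\tan(\pi/8))$. Because $\tan(\pi/8)<\sin(\pi/4)$, that disk is contained in $\{|\arg w|<\pi/4\}$, and therefore
\[
\phi_\theta(\xi)^2\in\Sigma:=\{w\in\CC:|\arg w|\le 2\arcsin(\tan(\pi/8))\}\cup\{0\}\subset\{|\arg w|<\pi/2\}\cup\{0\}.
\]
Since $\overline\Sigma$ and $\overline F$ are disjoint closed cones with $|z|\ge 1$ on $\overline F$, there exists $c_0=c_0(\gamma)>0$ such that
\[
|\phi_\theta(\xi)^2-z|\ \ge\ c_0\bigl(1+|z|+|\xi|^2\bigr),\qquad \forall\,\xi\in\RR^n,\ z\in F,\ \theta\in D_\gamma.
\]

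Next I would use this pointwise ellipticity to invert $Q_{\epsilon,\theta}-z$. The differential part $L_\theta:=-J_\theta^{-1/2}D_{\xi_l}J^{lj}J_\theta J^{kj}D_{\xi_k}J_\theta^{-1/2}$ in \eqref{eqn:Qepstheta} is a second-order operator whose coefficients are smooth with all derivatives bounded uniformly in $\theta\in D_\gamma$; this follows from \eqref{eqn:Jtheta} once one observes that the constraint \eqref{eqn:rho} keeps both $1+\theta\rho'$ and $1+\theta|\xi|^{-1}\rho$ inside $D(1,\tan(\pi/8))$. A standard semiclassical parametrix construction for $Q_{\epsilon,\theta}-z$, with semiclassical parameter $h=\sqrt{\epsilon}$ and a Shubin-type symbol class adapted to the Davies oscillator, then produces an approximate inverse on the harmonic-oscillator domain $B^2(\RR^n):=\{u\in L^2:\xi^\alpha D_\xi^\beta u\in L^2,\ |\alpha|+|\beta|\le 2\}$, with error $\mathcal O(\sqrt{\epsilon})$ uniformly in $z\in F$ and $\theta\in D_\gamma$. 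For $\epsilon<\epsilon_0=\epsilon_0(\Omega,\gamma)$ the error is invertible by a Neumann series, yielding a bounded inverse and hence $z\notin\sigma(Q_{\epsilon,\theta})$.

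The main obstacle is the perturbation step. One must verify that the two-derivative gain from inverting the elliptic principal symbol $\phi_\theta^2-z$ precisely balances the two-derivative loss from $L_\theta$, uniformly in $z\in F$, $\theta\in D_\gamma$ and small $\epsilon$. A hands-on route via integration by parts requires careful control of commutators of $(\phi_\theta^2-z)^{-1}$ with $D_\xi$, and the algebra becomes awkward because the weight $(1+|\xi|^2)$ enters both the ellipticity estimate and the $B^2$-norm. The cleaner route, and the one I would adopt, is to treat $Q_{\epsilon,\theta}$ as an $\epsilon$-pseudodifferential operator of Shubin type and invoke standard semiclassical invertibility results for elliptic symbols, which package these commutator estimates into a single black box.
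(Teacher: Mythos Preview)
Your overall strategy---semiclassical ellipticity with $h=\sqrt\epsilon$ in the class $S(m)$, $m=1+|\xi|^2+|\xi^*|^2$---is exactly the paper's. The gap is at the ellipticity step. You only establish
\[
|\phi_\theta(\xi)^2-z|\ \ge\ c_0\bigl(1+|z|+|\xi|^2\bigr),
\]
which is ellipticity of the \emph{multiplication} part. The semiclassical black box you want to invoke requires ellipticity of the \emph{full} principal symbol
\[
q_\theta^0(\xi,\xi^*)\ =\ \phi_\theta(\xi)^2\ -\ i\bigl(D\phi_\theta(\xi)^{-2}\xi^*\bigr)\cdot\xi^*,
\]
namely $|q_\theta^0-z|\ge c\,m$. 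Nothing in your argument rules out cancellation between the two summands: the $\xi^*$-quadratic form has the same order as $\phi_\theta^2$ in $S(m)$, and inverting the multiplication operator $\phi_\theta^2-z$ gains the weight $\langle\xi\rangle^{-2}$ but no $D_\xi$-regularity, so the ``two-derivative gain'' you appeal to does not offset $L_\theta$. Equivalently, $L_\theta$ is not relatively bounded with respect to $\phi_\theta^2-z$, and the Shubin parametrix cannot be built from the lower bound on $\phi_\theta^2-z$ alone.

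The paper closes this gap by locating the $\xi^*$-term in a sector as well. Diagonalizing $D\phi_\theta$ via \eqref{eqn:diag Dpsi}, the eigenvalues of $D\phi_\theta^{-2}$ are $(1+\theta\rho(|\xi|)/|\xi|)^{-2}$ and $(1+\theta\rho'(|\xi|))^{-2}$, each with argument in $(-\pi/4,\pi/4)$ for $\theta\in D_\gamma$; hence $\arg\bigl(-i(D\phi_\theta^{-2}\xi^*)\cdot\xi^*\bigr)\in(-3\pi/4,-\pi/4)$ and $|(D\phi_\theta^{-2}\xi^*)\cdot\xi^*|\ge C|\xi^*|^2$. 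Since also $\arg(\phi_\theta^2-z)\in(-\pi/2,\pi/4)$ for $z\in F$, a law-of-cosines estimate (the paper's \eqref{cosine law}) gives
\[
|q_\theta^0-z|\ \ge\ C\bigl(|\phi_\theta(\xi)^2-z|+|\xi^*|^2\bigr)\ \ge\ C\,m(\xi,\xi^*),
\]
uniformly in $\theta\in D_\gamma$ and $z\in F$. With this full-symbol ellipticity in hand, your final paragraph goes through verbatim.
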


\begin{proof}
We note that for $\theta\in D_\gamma $, by \eqref{eqn:rho},
\[ 1-\tan\frac{\pi}{8} < 1-|\theta||\rho'(t)| \leq | 1 + \theta\rho'(t) | \leq 1+|\theta||\rho'(t)| < 1 + \tan\frac{\pi}{8},\quad\forall\,t\geq 0. \]
Thus, \eqref{eqn:Jtheta} implies that $ C^{-1} < |J_\theta(\xi)| < C $ for some constant $C>0$. Since
\[ [D\phi_\theta(\xi)]_{jl} = \left( 1+\theta\frac{\rho(|\xi|)}{|\xi|} \right)\delta_{jl} + \frac{\theta|\xi|\rho'(|\xi|)-\theta\rho(|\xi|)}{|\xi|^3}\xi_j \xi_l \]
by \eqref{eqn:Dpsi}, and $\rho'\in \CIc((0,\infty))$, together with \eqref{eqn:Jtheta}, we conclude that
\begin{equation}
\label{CIbs}
    J_\theta, J_\theta^{-1}, J^{lj}\in\CIb(\RR^n),\ 1\leq j,l\leq n.
\end{equation}
Here $\CIb(\RR^n):=\{u\in\CI(\RR^n) : |\partial^\alpha u|\leq C_\alpha\textrm{ for all }\alpha\in\NN_0^n\}$. Hence we have
\begin{equation}
\label{eqn:rewrite_Qepstheta}
    Q_{\epsilon,\theta} = \phi_\theta(\xi)^2 - i\epsilon J^{kj}(\xi)J^{lj}(\xi) D_{\xi_k}D_{\xi_l} + \epsilon a_j(\xi)D_{\xi_j} + \epsilon b(\xi) ,
\end{equation}
where $a_j, b\in\CIb(\RR^n)$. Let $h=\sqrt{\epsilon}$, then $Q_{\epsilon,\theta}=q_\theta(\xi,h D_\xi;h)$ is a semiclassical differential operator -- see Zworski \cite[\S 4]{Zw}, with the symbol
\begin{equation}
\label{eqn:symbol qtheta}
    q_\theta(\xi,\xi^\ast;h) = \phi_\theta(\xi)^2 - i (D\phi_\theta(\xi)^{-2}\xi^\ast) \cdot \xi^\ast + ha_j(\xi)\xi_j^\ast + h^2 b(\xi) ,
\end{equation}
where $(\xi,\xi^\ast)$ are coordinates of $\textrm{T}^\ast \RR^n$, $D\phi_\theta(\xi)^{-2}=(D\phi_\theta(\xi)^{-1})^T (D\phi_\theta(\xi)^{-1})$ since $D\phi_\theta(\xi)$ is a symmetric matrix. Choose $m(\xi,\xi^\ast)=1+\xi^2+{\xi^\ast}^2$ as an order function, we recall the symbol class $S(m)$ from \cite[\S 4.4]{Zw},
\begin{equation}
\label{eqn:symbol class}
    S(m) := \{ a\in\CI : |\partial^\alpha a|\leq C_\alpha m\quad \textrm{for }\forall\,\alpha\in \NN_0^{2n} \}.
\end{equation}
Then by \eqref{eqn:rho}, \eqref{eqn:phitheta} and \eqref{CIbs}, we have $q_\theta\in S(m)$. Hence it suffices to show that there exists constant $h_0>0$ such that for $h<h_0$,
\[ q_\theta - z \textrm{ is elliptic in }S(m) \textrm{ for }|z|>1,\ \pi/2 < \arg z < \pi . \]
For a detailed introduction of general elliptic theory, we refer to \cite[\S 4]{Zw}.

Using \eqref{eqn:psi} we calculate:
\begin{equation}
\label{eqn:phitheta^2}
    \phi_\theta(\xi)^2 = (\xi + \theta \psi(\xi) )\cdot (\xi + \theta\psi(\xi)) = (|\xi| + \theta \rho(|\xi|))^2.
\end{equation}
Then for $\theta\in D_\gamma$, by \eqref{eqn:rho}, we have
\begin{equation}
\label{phitheta^2 bounds}
    -\pi/4 < \arg \phi_\theta(\xi)^2 < \pi/4,\quad |\phi_\theta(\xi)^2| > \left(1-\tan\frac{\pi}{8}\right)^2 |\xi|^2.
\end{equation}
To obtain similar bounds for the argument and modulus of $(D\phi_\theta(\xi)^{-2}\xi^\ast)\cdot\xi^\ast$, we recall \eqref{eqn:diag Dpsi} to compute
\begin{equation}
\label{eqn:xi*term}
    (D\phi_\theta^{-2}\xi^\ast)\cdot\xi^\ast =  ( 1+\theta{\rho(|\xi|)}{|\xi|}^{-1} )^{-2} ({\eta_1^\ast}^2 + \cdots + {\eta_{n-1}^\ast}^2 ) + (1+\theta\rho'(|\xi|))^{-2} {\eta_n^\ast}^2 ,
\end{equation}
where $\eta^\ast = A^T \xi^\ast\in\RR^n$ with the same orthogonal matrix $A$ as in \eqref{eqn:diag Dpsi}. By \eqref{eqn:rho}, for $\theta\in D_\gamma$, we have
\[ \pm \Im\theta \geq 0 \implies 0 \leq \pm\arg (1+\theta{\rho(|\xi|)}{|\xi|}^{-1}),\, \pm\arg (1+\theta\rho'(|\xi|)) < \pi/8, \]
Hence, for all $\theta\in D_\gamma$,
\begin{equation}
\label{eqn:arg xi*term}
    \pm \Im\theta \geq 0 \implies 0 \leq \mp \arg\, (D\phi_\theta^{-2}\xi^\ast)\cdot\xi^\ast < \pi/4,
\end{equation}
and by applying the following basic inequality with \eqref{eqn:rho} to \eqref{eqn:xi*term},
\begin{equation}
\label{cosine law}
    |r_1 e^{i\theta_1} + r_2 e^{i\theta_2}|^2 = r_1^2 + r_2 ^2 + 2r_1 r_2 \cos(\theta_1 - \theta_2)\geq \frac{1-|\cos(\theta_1 - \theta_2)|}{2}(r_1 + r_2)^2 ,
\end{equation}
we also obtain that for all $\theta\in D_\gamma$,
\begin{equation}
\label{eqn:norm xi*term}
    | (D\phi_\theta^{-2}\xi^\ast)\cdot\xi^\ast|\geq C|\eta^\ast|^2 = C |\xi^\ast|^2 .
\end{equation}
Since $\arg (\phi_\theta(\xi)^2 - z)\in (-\pi/2,\pi/4)$ for $ \pi/2 < \arg z < \pi $ and $ \arg -i(D\phi_\theta^{-2}\xi^\ast)\cdot\xi^\ast \in (-3\pi/4, -\pi/4) $ by \eqref{eqn:arg xi*term}, using \eqref{cosine law} together with \eqref{phitheta^2 bounds} and \eqref{eqn:norm xi*term}, we have
\begin{equation}
\label{eqn:q_theta ellipticity}
    \begin{split}
        |\phi_\theta(\xi)^2 - z - i(D\phi_\theta^{-2}\xi^\ast)\cdot\xi^\ast| &\geq C |\phi_\theta(\xi)^2 - z| +  C|- i(D\phi_\theta^{-2}\xi^\ast)\cdot\xi^\ast| \\
        & \geq C|\phi_\theta(\xi)^2| + C |z| + C |\xi^\ast |^2 \\
        & \geq C (1 + |\xi|^2 + |\xi^\ast|^2) = Cm .
    \end{split}
\end{equation}
Then by \eqref{eqn:symbol qtheta}, we conclude that there exists $h_0 > 0$ such that for all $h<h_0$, $|q_\theta - z|\geq Cm$, which completes the proof.
\end{proof}

\begin{figure}
\includegraphics[width=5in]{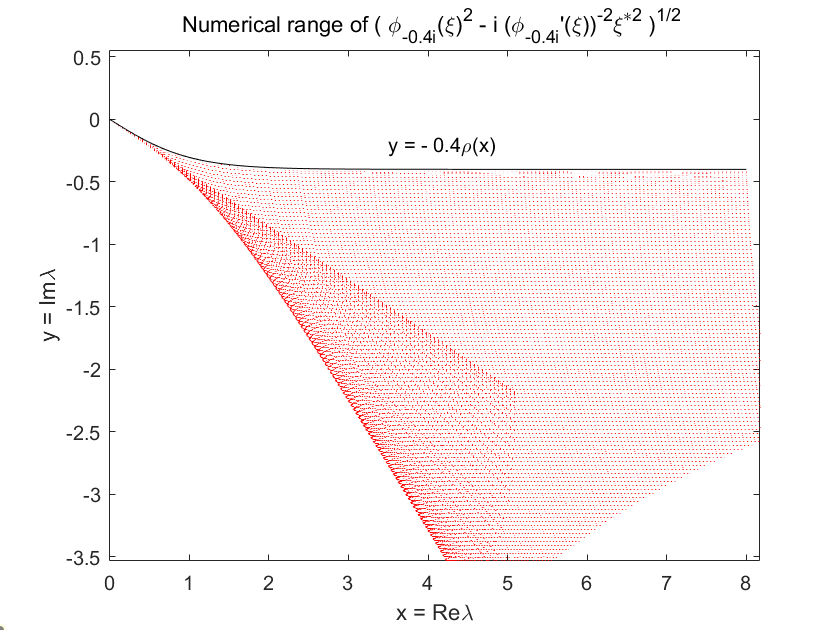}
\caption{An illustration of the results of Proposition \ref{prop:SpecQeps_beta} in the case of $\dim = 1$, $\beta=0.4$, which shows that the numerical range of the principal symbol of $Q_{\epsilon,-0.4i}$ avoids the region $\{ \lambda^2:\lambda\in\Omega \}$. We choose $\rho(\cdot)=0.4\tanh(\cdot)$ to compute the numerical range of $( \phi_{-0.4i}(\xi)^2 - i(\phi_{-0.4i}'(\xi))^{-2}{\xi^\ast}^2 )^{1/2}$.}%
\end{figure}

\begin{prop}
\label{prop:SpecQeps_beta}
For any $\beta\in (\gamma',\gamma)$ satisfying
\begin{equation}
\label{beta condition}
    \Omega \Subset \{ x+iy : x>0, y > -\beta \rho(x) \},
\end{equation}
there exists $\epsilon_0=\epsilon_0 (\Omega,\gamma,\beta)$ such that for all $0<\epsilon<\epsilon_0$,
\[
\sigma(Q_{\epsilon,-i\beta}) \cap \{ \lambda^2 : \lambda\in\Omega \} = \emptyset.  \]
\end{prop}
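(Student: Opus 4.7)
The plan is to follow the strategy of Proposition \ref{prop:Spec_Qepstheta}: show that the symbol $q_{-i\beta}(\xi,\xi^\ast;h) - \lambda^2$ is semiclassically elliptic in $S(m)$ uniformly for $\lambda\in\overline\Omega$, then invoke the semiclassical ellipticity theory from \cite[\S 4]{Zw} to conclude that $Q_{\epsilon,-i\beta}-\lambda^2$ is invertible for $h=\sqrt{\epsilon}$ small. This will yield $\lambda^2\notin\sigma(Q_{\epsilon,-i\beta})$.

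The key new input is the geometric hypothesis \eqref{beta condition}. The map $\phi_{-i\beta}(\xi)=|\xi|-i\beta\rho(|\xi|)$ takes values on the curve $\Gamma_\beta:=\{t-i\beta\rho(t):t\geq 0\}$, which by \eqref{beta condition} lies on the boundary of the open region $\{x+iy:x>0,\,y>-\beta\rho(x)\}$ containing $\overline\Omega$ as a compact subset. This gives $|\phi_{-i\beta}(\xi)-\lambda|\geq c_1>0$ uniformly in $\xi\in\RR^n$ and $\lambda\in\overline\Omega$. Combined with the trivial bound $\Re(\phi_{-i\beta}(\xi)+\lambda)=|\xi|+\Re\lambda\geq a'$, one has $|\phi_{-i\beta}(\xi)^2-\lambda^2|\geq c_1 a'$, strengthened to $|\xi|^2/4$ once $|\xi|\geq 2\sup_{\overline\Omega}|\lambda|$.

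By \eqref{eqn:arg xi*term} and \eqref{eqn:norm xi*term} with $\Im\theta=-\beta<0$, the term $-i(D\phi_{-i\beta}^{-2}\xi^\ast)\cdot\xi^\ast$ has argument in $[-\pi/2,-\pi/4)$ and modulus bounded below by $C|\xi^\ast|^2$. The main technical step, analogous to \eqref{eqn:q_theta ellipticity}, is then to combine these two estimates into $|q^0_{-i\beta}(\xi,\xi^\ast)-\lambda^2|\geq Cm(\xi,\xi^\ast)$. This amounts to ruling out any cancellation between the two summands, i.e.\ showing that the equation $\phi_{-i\beta}(\xi)^2-\lambda^2=i(D\phi_{-i\beta}^{-2}\xi^\ast)\cdot\xi^\ast$ has no solution for $\lambda\in\overline\Omega$; by \eqref{eqn:xi*term} the right-hand side lies in the cone generated by $i(1-i\beta\rho(|\xi|)/|\xi|)^{-2}$ and $i(1-i\beta\rho'(|\xi|))^{-2}$, whose argument stays in $[\pi/2,3\pi/4)$. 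Using the identity $\Im(\phi_{-i\beta}^2-\lambda^2)=-2(\beta|\xi|\rho(|\xi|)+\Re\lambda\cdot\Im\lambda)$, the strict inequality $\Im\lambda>-\beta\rho(\Re\lambda)$ coming from \eqref{beta condition}, the angular constraint from \eqref{rectangle Omega}, and the monotonicity of $u\mapsto u\rho(u)$, a case analysis on the argument of $\phi_{-i\beta}^2-\lambda^2$ will preclude the matching.

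The main obstacle I anticipate is this cancellation analysis (the accompanying figure illustrates how the numerical range of the square root of the principal symbol avoids $\overline\Omega$), which essentially requires both the geometric input from \eqref{beta condition} and the angular constraint $\Omega\Subset\{-\pi/8<\arg\lambda<7\pi/8\}$. Once the elliptic lower bound is in hand, the $O(h)$ subleading terms in \eqref{eqn:symbol qtheta} are controlled exactly as in the proof of Proposition \ref{prop:Spec_Qepstheta}, and the semiclassical ellipticity theorem completes the proof.
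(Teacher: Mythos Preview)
Your proposal is correct and follows essentially the same route as the paper: reduce to semiclassical ellipticity of $q_{-i\beta}-\lambda^2$ in $S(m)$, establish a quantitative separation of $\arg(\phi_{-i\beta}(\xi)^2-\lambda^2)$ from the range $[\pi/2,3\pi/4]$ of $\arg\bigl(i(D\phi_{-i\beta}^{-2}\xi^\ast)\cdot\xi^\ast\bigr)$, and then invoke \eqref{cosine law} exactly as in \eqref{eqn:q_theta ellipticity}.

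One caution for the case analysis you sketch: the imaginary-part identity together with \eqref{beta condition} and monotonicity of $u\mapsto u\rho(u)$ does give $t\rho(t)<x\rho(x)$ (writing $t=|\xi|$, $x=\Re\lambda$), but this alone does not close the argument. The paper extracts the complementary inequality from the condition $-\Re\leq\Im$, and there the relevant monotone quantity is the auxiliary function $S(u)=u^2-2\beta u\rho(u)-\beta^2\rho(u)^2$; checking $S'>0$ is exactly where the hypothesis $0\leq\rho'<\gamma^{-1}\tan(\pi/8)$ from \eqref{eqn:rho} enters. With $S(x)<S(t)\Rightarrow x<t$ in hand, the monotonicity of $u\rho(u)$ that you cite then yields $\beta t\rho(t)\geq\beta x\rho(x)>xy$, contradicting the imaginary-part inequality. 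So your outline is right, but be sure to bring in the $\rho'$ bound via $S$ when you carry out the details.
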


\begin{proof}
As in the proof of Proposition \ref{prop:Spec_Qepstheta}, it suffices to show that there exists $h_0=h_0(\Omega,\gamma,\beta)$ such that for $0<h<h_0$,
\[ q_{-i\beta}(\xi,\xi^\ast;h) - \lambda^2 \textrm{ is elliptic in }S(m)\ \textrm{for }\lambda\in\Omega. \]
Recalling $\arg -i(D\phi_{-i\beta}^{-2}\xi^\ast)\cdot\xi^\ast \in [-\pi/2,-\pi/4) $ by \eqref{eqn:arg xi*term}, in order to apply \eqref{cosine law}, we claim that
\begin{equation}
\label{eqn:argument claim}
    \exists\,\delta > 0 \textrm{ s.t. }\arg (\phi_{-i\beta}(\xi)^2 -\lambda^2 ) \leq \pi/2-\delta \textrm{ or }\geq 3\pi/4+\delta,\ \textrm{for all }\lambda\in \Omega, \,\xi\in\RR^n.
\end{equation}
We notice that for $|\xi|\gg 1$, $\phi_{-i\beta}(\xi)^2 = (|\xi|-i\beta)^2$ by \eqref{eqn:phitheta^2}, thus $\arg (\phi_{-i\beta}(\xi)^2-\lambda^2) \in (-\pi/4,0)$, in other words, there exists some large $R$ such that \eqref{eqn:argument claim} holds for $|\xi|>R$ with $\delta=\pi/2$. It remains to show that \eqref{eqn:argument claim} holds for all $|\xi|\leq R$ and $\lambda\in\Omega$. We argue by contradiction: if it does not hold, there must exist $\lambda\in\overline{\Omega}$, $\xi\in\RR^n$ such that $\arg(\phi_{-i\beta}(\xi)^2 - \lambda^2)\in [\pi/2,3\pi/4]$, i.e.
\[ 0\leq -\Re\, ( (|\xi|-i\beta\rho(|\xi|))^2 - \lambda^2 ) \leq \Im\,( (|\xi|-i\beta\rho(|\xi|))^2 - \lambda^2 ), \]
which immediately implies $\Im\lambda\leq 0$. Let $t=|\xi|$ and write $\lambda = x - iy$, then we have
\begin{align}
     x^2 -y^2 -t^2 +\beta^2 \rho(t)^2 & \leq 2xy - 2\beta t\rho(t) \label{ineq:Re} \\
     \beta t\rho(t) & \leq xy \label{ineq:Im}
\end{align}
Since $x>0$ and $0\leq y<\beta\rho(x)$ by \eqref{beta condition}, then \eqref{ineq:Re} implies that
\[ x^2 - 2\beta x\rho(x) - \beta^2 \rho(x)^2 < t^2 - 2\beta t\rho(t) - \beta^2 \rho(t)^2. \]
Let $ S(x) = x^2 - 2\beta x\rho(x) - \beta^2 \rho(x)^2 $, by \eqref{eqn:rho},
\[
\begin{split}
S'(x) & = 2x\left(1 - \beta \frac{\rho(x)}{x} - \beta\rho'(x) - \beta \frac{\rho(x)}{x} \cdot\beta\rho'(x)\right) \\
      & > 2x \left( 1-2\tan\frac{\pi}{8}-\tan^2\frac{\pi}{8} \right) = 0,
\end{split}
\]
thus $ S(x) < S(t) \implies x < t $. Recalling that $\rho$ is non-decreasing, we have $ \beta t\rho(t) \geq \beta x \rho (x) > xy $,
which contradicts \eqref{ineq:Im}. Hence \eqref{eqn:argument claim} holds, using \eqref{cosine law} and \eqref{eqn:norm xi*term}, we obtain that
\[
|\phi_{-i\beta}(\xi)^2 - \lambda^2 - i(D\phi_{-i\beta}^{-2}\xi^\ast)\cdot \xi^\ast|
\geq C(\delta) ( | (|\xi|-i\beta\rho(|\xi|))^2 - \lambda^2 | + |\xi^\ast|^2 ).
\]
Since for $|\xi|\gg 1$,
\[ | (|\xi|-i\beta\rho(|\xi|))^2 - \lambda^2 | = | (|\xi| - i\beta)^2 - \lambda^2 | \geq |\xi|^2 - \beta^2 - |\lambda|^2, \]
there exists $R=R(\Omega,\beta)>0$ such that $ | (|\xi|-i\beta\rho(|\xi|))^2 - \lambda^2 | \geq ( 1 + |\xi|^2 )/2 $ whenever $|\xi|>R$.
We also note that, by \eqref{beta condition},
\[ \textrm{dist}\,( \{ t-i\beta\rho(t) : t\geq 0 \},\,\pm\Omega ) \geq C=C(\Omega,\gamma,\beta) > 0, \]
thus $ | (|\xi|-i\beta\rho(|\xi|))^2 - \lambda^2 |\geq C^2 \geq C^2 (1+R^2)^{-1} (1 + |\xi|^2)$  for $ |\xi|\leq R $. Hence $|\phi_{-i\beta}(\xi)^2 - \lambda^2 - i(D\phi_{-i\beta}^{-2}\xi^\ast)\cdot \xi^\ast| \geq C (1 + |\xi|^2 +|\xi^\ast|^2 )$, where $C$ determined by $\Omega,\gamma,\beta$. Then by \eqref{eqn:symbol qtheta}, we conclude that there exist $h_0=h_0(\Omega,\gamma,\beta)$ and $C=C(\Omega,\gamma,\beta)>0$ such that
\begin{equation}
\label{eqn:qbeta elliptic}
    \textrm{for all }0<h<h_0,\, \lambda\in\Omega,\quad |q_{-i\beta}(\xi,\xi^\ast;h)-\lambda^2|\geq Cm(\xi,\xi^\ast),
\end{equation}
which completes the proof.
\end{proof}

Now we state the main result of this section:
\begin{lem}
\label{lem:weighted Davies resolvent estimate}
For any $0<a'<a<b$ and $\gamma'<\gamma$ such that the rectangle
\[    \Omega := (a',a) + i(-\gamma',b)  \Subset \{\lambda\in\CC: -\pi/8<\arg\lambda<7\pi/8\},
\]
there exist constant $C = C(\Omega,\gamma)>0$ and $\epsilon_0 = \epsilon_0(\Omega,\gamma) >0$ such that uniformly for $0<\epsilon<\epsilon_0$,
\[
    \| e^{-\gamma|x|} (-\Delta-i\epsilon x^2-\lambda^2)^{-1} e^{-\gamma|x|} \|_{L^2\to L^2} \leq C,\quad\forall\,\lambda\in\Omega.
\]
\end{lem}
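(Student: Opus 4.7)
The plan is to use the Fourier--side complex scaling from Section 3 to convert the physical--side exponential weight into a matrix element of the deformed resolvent $(Q_{\epsilon,-i\beta}-\lambda^2)^{-1}$, whose $L^2\to L^2$ norm is controlled uniformly by the ellipticity estimate of Proposition \ref{prop:SpecQeps_beta}. Since the design of $\rho$ in \eqref{eqn:rho} guarantees $\Omega\Subset\{y>-\gamma\rho(x)\}$, continuity lets one pick $\beta\in(\gamma',\gamma)$ satisfying \eqref{beta condition}. The ellipticity $|q_{-i\beta}(\xi,\xi^\ast;h)-\lambda^2|\ge Cm(\xi,\xi^\ast)$ from \eqref{eqn:qbeta elliptic}, together with standard semiclassical inversion (parametrix in $S(m^{-1})$ plus Neumann series, followed by Calder\'on--Vaillancourt) gives
\[ \|(Q_{\epsilon,-i\beta}-\lambda^2)^{-1}\|_{L^2\to L^2}\le C,\qquad\lambda\in\Omega,\ 0<\epsilon<\epsilon_0. \]

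For $f,g\in\mathcal S(\RR^n)$, set $w=e^{-\gamma|x|}f$ and $\tilde w=e^{-\gamma|x|}g$; a Paley--Wiener argument shows $\hat w$ and $\hat{\tilde w}$ extend to bounded holomorphic functions on the tube $|\Im\zeta|<\gamma$. Using $\mathcal F(-\Delta-i\epsilon x^2)\mathcal F^{-1}=\xi^2+i\epsilon\Delta_\xi$ and the unitarity of $U_\theta$ for $\theta\in\RR$,
\[ \langle(-\Delta-i\epsilon x^2-\lambda^2)^{-1}w,\tilde w\rangle=\langle(Q_{\epsilon,\theta}-\lambda^2)^{-1}U_\theta\hat w,\,U_\theta\hat{\tilde w}\rangle,\quad\theta\in\RR. \]
Define $F(\theta):=\langle(Q_{\epsilon,\theta}-\lambda^2)^{-1}U_\theta\hat w,\,U_{\bar\theta}\hat{\tilde w}\rangle$ on $D_\gamma$. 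The holomorphic extensions of $Q_{\epsilon,\theta}$ and $\phi_\theta$ to $D_\gamma$ noted in Section 3, together with the Paley--Wiener holomorphy of $\hat w,\hat{\tilde w}$, make $F$ holomorphic on the open subset of $D_\gamma$ where the resolvent exists. Since $F$ equals the $\theta$-independent quantity $\langle(-\Delta-i\epsilon x^2-\lambda^2)^{-1}w,\tilde w\rangle$ on the real axis, analytic continuation along a path joining $0$ to $-i\beta$ in this set yields
\[ \langle(-\Delta-i\epsilon x^2-\lambda^2)^{-1}w,\tilde w\rangle=\langle(Q_{\epsilon,-i\beta}-\lambda^2)^{-1}U_{-i\beta}\hat w,\,U_{i\beta}\hat{\tilde w}\rangle. \]

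It remains to bound $T_\pm:=U_{\pm i\beta}\mathcal F e^{-\gamma|x|}:L^2\to L^2$. Writing out
\[ U_{-i\beta}\hat w(\xi)=J_{-i\beta}(\xi)^{1/2}(2\pi)^{-n/2}\int e^{-ix\cdot\xi}\,e^{-\beta\psi(\xi)\cdot x-\gamma|x|}\,f(x)\,dx, \]
one identifies $T_-$ (after smoothing $|x|\rightsquigarrow\varphi(x)$ as in Lemma \ref{lem:meroCont}) as an oscillatory integral operator with amplitude $a(x,\xi)=J_{-i\beta}(\xi)^{1/2}e^{-\beta\psi(\xi)\cdot x-\gamma\varphi(x)}$. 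Since $|\psi|\le 1$, $|a|\le Ce^{-(\gamma-\beta)\varphi(x)}$; each $\partial_\xi$ derivative produces a factor polynomial in $x$ absorbed by this exponential, while $\partial_x$ contributes only bounded factors, so $a\in S(1)$ uniformly in $\epsilon$. Calder\'on--Vaillancourt applied to $\mathcal F^{-1}T_-$ gives $\|T_-\|_{L^2\to L^2}\le C$, and the analogous argument with $\beta\to-\beta$ gives $\|T_+\|_{L^2\to L^2}\le C$. Combining with the resolvent bound, $|\langle(-\Delta-i\epsilon x^2-\lambda^2)^{-1}w,\tilde w\rangle|\le C\|f\|_{L^2}\|g\|_{L^2}$, and density of $\mathcal S\subset L^2$ finishes the proof. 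The principal obstacle is this last step: the constraint $|\psi|\le 1$ built into \eqref{eqn:psi} is precisely what places the symbol in $S(1)$, and this succeeds only because $\beta<\gamma$, which is also why the lemma's hypothesis $\gamma'<\gamma$ cannot be dropped.
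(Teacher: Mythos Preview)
Your overall strategy matches the paper's: pass to the Fourier side, conjugate by $U_\theta$, analytically continue in $\theta$ to $-i\beta$, and invoke the uniform resolvent bound coming from the ellipticity in Proposition~\ref{prop:SpecQeps_beta}. The differences are in the implementation of two steps.

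For the $L^2$-boundedness of $T_\pm=U_{\pm i\beta}\mathcal F e^{-\gamma|x|}$, the paper writes $\mathcal F(e^{-\gamma|x|}f)=K_\gamma\ast\hat f$ with the explicit kernel $K_\gamma(\xi)=c_n\gamma(\gamma^2+\xi^2)^{-(n+1)/2}$, shows that the integral kernel $K(\xi,\eta;\theta)=J_\theta(\xi)^{1/2}K_\gamma(\phi_\theta(\xi)-\eta)$ of $U_\theta\circ(K_\gamma\ast\,\cdot\,)$ extends holomorphically to $\theta\in D_\gamma$, and bounds it on $L^2$ by Schur's test. Your Calder\'on--Vaillancourt route is a legitimate alternative; the explicit-kernel route has the side benefit of exhibiting $\theta\mapsto U_\theta F_\gamma$ as an analytic $L^2$-valued family directly, rather than via a pointwise Paley--Wiener statement.

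The analytic continuation step, however, has a genuine gap. You propose to continue $F(\theta)$ along a path from $0$ to $-i\beta$ inside the set where $(Q_{\epsilon,\theta}-\lambda^2)^{-1}$ exists, but you never verify that such a path exists. For $\lambda\in\Omega$ with $\Im\lambda<0$ this is not automatic: one would need the bad set $\{\theta\in D_\gamma:\lambda^2\in\sigma(Q_{\epsilon,\theta})\}$ to be discrete, which calls for an analytic Fredholm argument---and that in turn requires a reference point $z_0$ lying in the resolvent set of $Q_{\epsilon,\theta}$ for \emph{every} $\theta\in D_\gamma$. This is precisely what Proposition~\ref{prop:Spec_Qepstheta} provides, yet you never invoke it. The paper avoids the issue by a two-step continuation: first restrict to $|\lambda|>1$, $\pi/4<\arg\lambda<\pi/2$, where Proposition~\ref{prop:Spec_Qepstheta} guarantees $(Q_{\epsilon,\theta}-\lambda^2)^{-1}$ exists for all $\theta\in D_\gamma$, so $\theta\mapsto B^\epsilon_{f,g}(\lambda;\theta)$ is holomorphic on the whole of $D_\gamma$ and hence constant; then continue analytically in $\lambda$ across $\Omega$, using that both $B^\epsilon_{f,g}(\lambda)$ and $B^\epsilon_{f,g}(\lambda;-i\beta)$ are holomorphic there by \eqref{Davies spectrum} and Proposition~\ref{prop:SpecQeps_beta} respectively. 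Either your one-step route (patched with analytic Fredholm plus Proposition~\ref{prop:Spec_Qepstheta}) or the paper's two-step route works, but one of them must actually be carried out.
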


\begin{proof}
We consider the matrix element
\[ B^\epsilon_{f,g}(\lambda) := \langle e^{-\gamma|x|} (-\Delta-i\epsilon x^2-\lambda^2)^{-1} e^{-\gamma|x|}f , g \rangle_{L_x^2},\quad\textrm{for }f,g\in L^2(\RR^n), \]
where $ \langle u,v \rangle_{L_x^2} = \int_{\RR^n} u\Bar{v} \,dx $ is the standard $L^2$ inner product. It suffices to show that there exist $C,\epsilon_0$ such that uniformly for $0<\epsilon<\epsilon_0$,
\begin{equation}
\label{eqn:bound matrix element}
| B^\epsilon_{f,g}(\lambda) | \leq C \|f\|_{L^2} \|g\|_{L^2} ,\quad \textrm{for all }f,g\in L^2,\  \lambda\in\Omega.
\end{equation}
Recalling \eqref{Davies spectrum}, both $-\Delta_x-i\epsilon x^2 -\lambda^2$ and $\xi^2 +i\epsilon\Delta_\xi -\lambda^2$ are invertible for $\lambda\in\Omega$. Then we have
\begin{equation}
\label{eqn:Bfg Fourier}
\begin{split}
    B^\epsilon_{f,g}(\lambda) & = \langle  \,( -\Delta_x - i\epsilon x^2 - \lambda^2                            )^{-1} e^{-\gamma|x|} f ,\, e^{-\gamma|x|} g                             \rangle_{L_x^2}  \\
                     & = \langle \FF^{-1} ( \xi^2 + i\epsilon \Delta_\xi - \lambda^2 )^{-1} \FF e^{-\gamma|x|} f ,\,
                     e^{-\gamma|x|} g \rangle_{L_x^2}  \\
                     & = \langle \,( \xi^2 + i\epsilon \Delta_\xi - \lambda^2 )^{-1} \FF (e^{-\gamma|x|} f) (\xi) ,\,
                     \FF (e^{-\gamma|x|} g) (\xi) \rangle_{L_\xi^2} .
\end{split}
\end{equation}
Let $ F_\gamma(\xi) := \FF (e^{-\gamma|x|} f) (\xi) $ and $ G_\gamma(\xi) := \FF (e^{-\gamma|x|} g) (\xi) $, recalling the formula
\[ \FF(e^{-|x|})(\xi) = c_n ( 1 + \xi^2 )^{-\frac{n+1}{2}},\quad c_n =            (2\pi)^{\frac{n}{2}} \Gamma((n+1)/2) \pi^{-\frac{n+1}{2}} , \]
then $ F_\gamma = K_\gamma \ast \hat{f} $ and $ G_\gamma = K_\gamma \ast \hat{g} $, where $K_\gamma(\xi) = c_n\gamma\,(\gamma^2 + \xi^2)^{-\frac{n+1}{2}}$.

First we consider, for $\theta\in\RR$, $|\theta|<\gamma$ and $U_\theta$ defined by \eqref{eqn:Utheta}, the integral kernel of the map $U_\theta\circ (K_\gamma\, \ast\ ) $:
\[  K(\xi,\eta;\theta) := (\Det D\phi_\theta(\xi))^{\frac12} K_\gamma(\phi_\theta(\xi)-\eta) ,\quad \xi,\eta\in\RR^n. \]
We claim that $K(\xi,\eta;\theta)$ has an analytic extension to $\theta\in D_\gamma$. Since $K_\gamma$ extends analytically to the strip $\{ \xi\in\CC^n : |\Im\xi|<\gamma \}$, it suffices to show that $|\Im (\phi_\theta(\xi) - \eta)| = |\Im\theta\psi(\xi)| <\gamma$, which is a direct consequence of $\theta\in D_\gamma$ and $|\psi(\xi)|\leq 1$ by \eqref{eqn:psi}. Then for $\theta\in D_\gamma$, using \eqref{eqn:rho} and \eqref{eqn:Jtheta}, we can estimate $K(\xi,\eta;\theta)$ as follows:
\[
    \begin{split}
        |K(\xi,\eta;\theta)|
        & \leq C\gamma\,|\gamma^2 + (\xi   +\theta\psi(\xi)-\eta)^2|^{-\frac{n+1}{2}} \\
        & \leq C\gamma\,|\gamma^2 - |\Im\theta|^2 |\psi(\xi)|^2 + (\xi-\eta+\Re\theta\psi(\xi))^2 |^{-\frac{n+1}{2}} \\
        & \leq C\gamma\,( \gamma^2 - |\Im\theta|^2 + (|\xi -\eta|-|\Re\theta|)^2 )^{-\frac{n+1}{2}}
    \end{split}
\]
thus
\begin{equation}
\label{eqn:intKernel estimate}
    \begin{split}
    { }& \quad  \max \,\{\,\sup_{\xi\in\RR^n}\int_{\RR^n} |K(\xi,\eta;\theta)|d\eta,\ \sup_{\eta\in\RR^n}\int_{\RR^n} |K(\xi,\eta;\theta)|d\xi\,\}  \\
    & \leq C\gamma\int_{x\in\RR^n}( \gamma^2 - |\Im\theta|^2 + (|x|-|\Re\theta|)^2 )^{-\frac{n+1}{2}} dx \leq C(\gamma,\theta) .
    \end{split}
\end{equation}
Hence, by Schur's criterion, $U_\theta\circ(K_\gamma\,\ast\ )$, first defined for $\theta\in D_\gamma\cap\RR$, with the integral kernel $K(\xi,\eta;\theta)$, extends to $\theta\in D_\gamma$ as an analytic family of operators $L^2 \to L^2$. In particular,
\[  D_\gamma\owns \theta \mapsto U_\theta F_\gamma = U_\theta(K_\gamma\ast\hat{f}) \textrm{ and }  U_\theta G_\gamma = U_\theta(K_\gamma\ast\hat{g}), \]
are two analytic families of functions in $L^2(\RR^n)$.

Now we define
\[ B^\epsilon_{f,g}(\lambda;\theta) = \langle\,(Q_{\epsilon,\theta}-\lambda^2)^{-1} U_\theta F_\gamma,U_{\bar\theta}G_\gamma \rangle\]
for $\theta\in D_\gamma$, with $Q_{\epsilon,\theta}$ given by \eqref{eqn:Qepstheta}, where we write $U_{\bar\theta}G_\gamma$ instead of $U_\theta G_\gamma$. Then by Proposition \ref{prop:Spec_Qepstheta}, there exists $\epsilon_0=\epsilon_0(\Omega,\gamma)$ such that for all $0<\epsilon<\epsilon_0$, and $|\lambda|> 1$ with $\pi/4<\arg\lambda<\pi/2$,
\[ D_\gamma\owns \theta \mapsto B^\epsilon_{f,g}(\lambda;\theta) \textrm{ is analytic. } \]
However, for $\theta\in D_\gamma\cap\RR$, since $U_\theta$ is unitary, by \eqref{eqn:Bfg Fourier} we have
\[
    \begin{split}
        B_{f,g}^\epsilon (\lambda;\theta) & = \langle U_\theta (\xi^2 + i\epsilon\Delta_\xi - \lambda^2)^{-1} U_\theta^{-1} U_\theta F_\gamma,\,U_\theta G_\gamma \rangle \\
        & = \langle U_\theta (\xi^2 + i\epsilon\Delta_\xi - \lambda^2)^{-1}  F_\gamma,\,U_\theta G_\gamma \rangle \\
        & = \langle\, (\xi^2 + i\epsilon\Delta_\xi - \lambda^2)^{-1}  F_\gamma,\, G_\gamma \rangle = B_{f,g}^\epsilon(\lambda) .
    \end{split}
\]
Thus by analyticity, $B_{f,g}^\epsilon(\lambda;\theta)\equiv B_{f,g}^\epsilon(\lambda),\ \forall\,\theta\in D_\gamma$ whenever $|\lambda|>1$, $\pi/4 < \arg\lambda < \pi/2$. In particular, for fixed $\beta\in(\gamma',\gamma)$ satisfying \eqref{beta condition},
\[ B_{f,g}^\epsilon (\lambda) = B_{f,g}^\epsilon (\lambda;-i\beta) \textrm{ whenever }|\lambda|>1,\,\pi/4 < \arg\lambda < \pi/2 . \]
In view of Proposition \ref{prop:SpecQeps_beta} and \eqref{Davies spectrum}, both $ B_{f,g}^\epsilon (\lambda) $ and $ B_{f,g}^\epsilon (\lambda;-i\beta) $ are analytic in $\Omega$. Without loss of generality, we may assume that $a>1$ in \eqref{rectangle Omega}, then
\[ \Omega\cap \{\lambda : |\lambda|>1, \pi/4 < \arg\lambda < \pi/2 \} \neq \emptyset, \]
where $B_{f,g}^\epsilon(\lambda) $ and $B_{f,g}^\epsilon(\lambda;-i\beta)$ coincide. Hence by analyticity, we conclude that for each $0<\epsilon<\epsilon_0$,
\begin{equation}
\label{eqn:Bfg-ibeta}
    B_{f,g}^\epsilon(\lambda) = B_{f,g}^\epsilon(\lambda;-i\beta) = \langle\,(Q_{\epsilon,-i\beta}-\lambda^2)^{-1} U_{-i\beta} F_\gamma,\, U_{i\beta} G_\gamma\, \rangle,\quad\forall\,\lambda\in\Omega .
\end{equation}
By the elliptic theory of semiclassical differential operators -- see \cite[\S 4.7]{Zw}, \eqref{eqn:qbeta elliptic} implies that there exists $\epsilon_0=\epsilon_0(\Omega,\gamma,\beta)$ such that for all $0<\epsilon<\epsilon_0$,
\begin{equation}
\label{eqn:Qeps-ibeta resolvent bound}
    \|(Q_{\epsilon,-i\beta}-\lambda^2)^{-1}\|_{L^2\to L^2} \leq C(\Omega,\gamma,\beta),\quad\forall\,\lambda\in\Omega.
\end{equation}
Recalling \eqref{eqn:intKernel estimate}, by Schur's criterion, we obtain that
\begin{equation}
\label{eqn:Uibeta bound}
    \begin{split}
        \|U_{-i\beta}F_\gamma\|_{L^2} & = \|U_{-i\beta}\circ(K_\gamma\ast \hat{f})\|_{L^2}\leq C(\gamma,\beta) \|\hat{f}\|_{L^2} = C(\gamma,\beta)\|f\|_{L^2} \\
        \|U_{i\beta}G_\gamma\|_{L^2} & = \|U_{i\beta}\circ(K_\gamma\ast \hat{g})\|_{L^2}\leq C(\gamma,\beta) \|\hat{g}\|_{L^2} = C(\gamma,\beta)\|g\|_{L^2}
    \end{split}
\end{equation}
Combining \eqref{eqn:Bfg-ibeta}, \eqref{eqn:Qeps-ibeta resolvent bound} and \eqref{eqn:Uibeta bound}, also noticing that $\beta$ can be determined by $\Omega, \gamma$, we obtain \eqref{eqn:bound matrix element} with $C=C(\Omega,\gamma)$, which completes the proof.
\end{proof}

\section{eigenvalues of the regularized operator}
\label{eigenvalues}

In this section we will review the meromorphy of the resolvent
\[
R_{V,\epsilon}(\lambda) := (P_\epsilon - \lambda^2)^{-1},\quad\epsilon>0,
\]
with $P_\epsilon$ in \eqref{eqn:Peps}, in a similar form to the meromorphic continuation of the weighted resolvent $\sqrt{V}R_V(\lambda)\sqrt{V}$ given by \eqref{eqn:meroCont}.

First we write $R_\epsilon(\lambda) := (-\Delta - i\epsilon x^2 - \lambda^2)^{-1}$ and recall
\begin{equation}
\label{Reps bound}
    R_\epsilon(\lambda) = \mathcal{O}_\delta(1/|\lambda|): L^2 \to L^2,\quad \delta < \arg\lambda < 3\pi/4 - \delta,\ |\lambda|>\delta,
\end{equation}
which follows from (semiclassical) ellipticity. Then
\begin{equation}
\label{eqn:RVeps}
    (P_\epsilon - \lambda^2) R_\epsilon(\lambda) = I + V R_\epsilon(\lambda), \quad -\pi/8 < \arg\lambda <7\pi/8 .
\end{equation}
In view of \eqref{Reps bound}, $I + V R_\epsilon(\lambda)$ is invertible for $\pi/4 < \arg\lambda < \pi/2$, $|\lambda|\gg 1$. Since $R_\epsilon(\lambda): L^2 \to H^2$ is analytic in $\{ \lambda :  -\pi/8 < \arg\lambda <7\pi/8 \}$, see \eqref{Davies spectrum}, $V: H^2 \to L^2 $ is compact by \eqref{V decay}, we have $\lambda\mapsto V R_\epsilon(\lambda)$ is an analytic family of compact operators for $-\pi/8 < \arg\lambda <7\pi/8$. Hence $\lambda\mapsto ( I + V R_\epsilon(\lambda) )^{-1}$ is a meromorphic family of operators in the same range of $\lambda$. Using \eqref{eqn:RVeps}, we conclude that  $R_{V,\epsilon}(\lambda) = R_\epsilon(\lambda) ( I + V R_\epsilon(\lambda) )^{-1}$ is meromorphic for $-\pi/8 < \arg\lambda <7\pi/8$ (in fact $R_{V,\epsilon}(\lambda)$ is meromorphic for $\lambda\in\CC$ by the Gohberg--Sigal factorization theorem - see \cite[\S C.4]{res}), with poles $\{\lambda_j(\epsilon)\}_{j=1}^\infty$, i.e. $\{ \lambda_j(\epsilon)^2 \}_{j=1}^\infty$ are the eigenvalues of $P_\epsilon$ in $\{z\in\CC : \arg z \neq -\pi/4\}$. Then we have
\begin{lem}
\label{lem:newmeroCont}
    For each $\epsilon > 0$,
    \[
        \lambda \mapsto ( I + \sqrt{V}R_\epsilon(\lambda)\sqrt{V} )^{-1},\quad -\pi/8 < \arg\lambda < 7\pi/8,
    \]
    is a meromorphic family of operators on $L^2(\RR^n)$ with poles of finite rank. Moreover,
    \begin{equation}
    \label{eqn:mult 1}
        m_\epsilon(\lambda) := \frac{1}{2\pi i} \tr \oint_\lambda ( I + \sqrt{V}R_\epsilon(\zeta)\sqrt{V} )^{-1} \partial_\zeta(\sqrt{V}R_\epsilon(\zeta)\sqrt{V})\,d\zeta ,
    \end{equation}
    where the integral is over a positively oriented circle enclosing $\lambda$ and containing no poles other than possibly $\lambda$, satisfies
    \begin{equation}
    \label{eqn:mult 2}
        m_\epsilon(\lambda) = \frac{1}{2\pi i} \tr \oint_\lambda (\zeta^2 - P_\epsilon)^{-1} 2\zeta\, d\zeta .
    \end{equation}
\end{lem}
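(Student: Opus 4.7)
My plan is to establish the meromorphy and the multiplicity identity in turn.

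\textbf{Meromorphy.} The argument parallels Lemma \ref{lem:meroCont}, with the $L^2$-bound \eqref{Reps bound} replacing the weighted bound \eqref{weighted R0 bounds}. By \eqref{V decay}, $\sqrt V$ is a bounded multiplier with $|\sqrt V(x)|\le C e^{-\gamma|x|}$, so $\sqrt V: H^2\to L^2$ is compact (truncate to $|x|\le R$, invoke local Rellich, and use the exponential decay to handle the tail). Since $R_\epsilon(\lambda): L^2\to H^2$ is analytic in $-\pi/8<\arg\lambda<7\pi/8$, the family $\lambda\mapsto \sqrt V R_\epsilon(\lambda)\sqrt V : L^2\to L^2$ is an analytic family of compact operators on that region. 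For $\lambda_0$ with $\pi/4<\arg\lambda_0<\pi/2$ and $|\lambda_0|$ sufficiently large, the bound \eqref{Reps bound} combined with boundedness of $\sqrt V$ gives $\|\sqrt V R_\epsilon(\lambda_0)\sqrt V\|_{L^2\to L^2}\le C/|\lambda_0|<1$, so $I+\sqrt V R_\epsilon(\lambda_0)\sqrt V$ is invertible by a Neumann series. Analytic Fredholm theory \cite[\S C.4]{res} then produces the meromorphic inverse with finite-rank polar parts.

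\textbf{Multiplicity identity.} Write $K(\zeta):=\sqrt V R_\epsilon(\zeta)\sqrt V$ and note $\partial_\zeta K=2\zeta \sqrt V R_\epsilon(\zeta)^2 \sqrt V$. Applying the identity $(I+AB)^{-1}=I-A(I+BA)^{-1}B$ with $A=\sqrt V$, $B=R_\epsilon(\zeta)\sqrt V$, together with the commutation $(I+R_\epsilon V)^{-1}R_\epsilon=R_\epsilon(I+VR_\epsilon)^{-1}=R_{V,\epsilon}$, a short algebraic reduction gives
\[
(I+K(\zeta))^{-1}\partial_\zeta K(\zeta) \;=\; 2\zeta\,\sqrt V\, R_{V,\epsilon}(\zeta)\,R_\epsilon(\zeta)\,\sqrt V .
\]
Taking trace, cycling a pair $\sqrt V\cdots\sqrt V$ into a factor $V$, and invoking the resolvent identity $R_\epsilon V R_{V,\epsilon} = R_\epsilon - R_{V,\epsilon}$ (which follows from $P_\epsilon-\zeta^2=R_\epsilon^{-1}+V$) reduces the trace to $2\zeta\,\tr(R_\epsilon(\zeta)-R_{V,\epsilon}(\zeta))$. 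Since $R_\epsilon$ is holomorphic throughout $\Omega$, its contribution to the contour integral around $\lambda$ vanishes, and what remains coincides with $\frac{1}{2\pi i}\tr\oint_\lambda 2\zeta(\zeta^2-P_\epsilon)^{-1}\,d\zeta$ via $(\zeta^2-P_\epsilon)^{-1}=-R_{V,\epsilon}(\zeta)$, which is precisely \eqref{eqn:mult 2}.

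\textbf{Main obstacle.} The delicate point is that $R_\epsilon(\zeta)$ is not itself trace-class on $L^2(\RR^n)$, so the separated quantities $\tr R_\epsilon$ and $\tr R_{V,\epsilon}$ are not individually well-defined. I would address this by never extracting them as independent objects: the combination $R_\epsilon V R_{V,\epsilon} = R_\epsilon - R_{V,\epsilon}$ is genuinely trace-class, since writing $V=\sqrt V\cdot\sqrt V$ and using the exponential decay makes both $R_\epsilon\sqrt V$ and $\sqrt V R_{V,\epsilon}$ Hilbert--Schmidt on $L^2(\RR^n)$. Equivalently, one can work at the operator level throughout: the contour integral of $(I+K)^{-1}\partial_\zeta K$ is a finite-rank operator (the polar part of $(I+K(\zeta))^{-1}$ has finite rank), and its trace matches $\tr E_\lambda$, where $E_\lambda=\frac{1}{2\pi i}\oint (z-P_\epsilon)^{-1}\,dz$ is the Riesz spectral projection recovered from \eqref{eqn:mult 2} under $z=\zeta^2$.
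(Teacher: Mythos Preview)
Your meromorphy argument matches the paper's. For the multiplicity identity your route differs: the paper expands $(I+K(\zeta))^{-1}$ in its finite-rank Laurent series $\sum_{j=1}^J A_j(\zeta-\lambda)^{-j}+(\text{holomorphic})$, substitutes this and the operator identity \eqref{eqn:newmerocont} into both contour integrals, and compares the resulting finite sums term by term, invoking cyclicity only for the genuinely finite-rank $A_j$. You instead derive the closed formula $(I+K)^{-1}\partial_\zeta K=2\zeta\,\sqrt{V}\,R_{V,\epsilon}\,R_\epsilon\,\sqrt{V}$ and pass to the spectral projection via $R_\epsilon V R_{V,\epsilon}=R_\epsilon-R_{V,\epsilon}$, which is algebraically cleaner. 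The weak spot is the sentence ``reduces the trace to $2\zeta\,\tr(R_\epsilon-R_{V,\epsilon})$'': pointwise on the contour this trace is undefined, and your first proposed fix --- that $R_\epsilon\sqrt{V}$ and $\sqrt{V}R_{V,\epsilon}$ are Hilbert--Schmidt --- is not evident for $n\ge 2$ and would itself require an argument exploiting the oscillator confinement. Your second fix (perform the contour integral in operator norm first, obtaining a finite-rank operator on each side, and only then take traces and cycle) is correct; once written out using the Laurent expansion of $R_{V,\epsilon}$ it becomes the paper's computation in slightly different notation, since $(I+K)^{-1}=I-\sqrt{V}R_{V,\epsilon}\sqrt{V}$ links the two Laurent expansions. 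So your approach is valid, but you should discard the Hilbert--Schmidt claim and make the operator-level contour argument the primary justification rather than an afterthought.
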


\noindent
{\bf Remark.} The multiplicity of an eigenvalue $\lambda^2$ of $P_\epsilon$ can be defined by the right side of \eqref{eqn:mult 2}, thus Lemma \ref{lem:newmeroCont} implies that the poles of $( I + \sqrt{V}R_\epsilon(\lambda)\sqrt{V} )^{-1}$ coincide with $\{\lambda_j(\epsilon)\}_{j=1}^\infty$ given in \eqref{PepsEigenvalues}, with agreement of multiplicities.

\begin{proof}
Following the above argument, it easy to see that $\lambda\mapsto \sqrt{V}R_\epsilon(\lambda)\sqrt{V}$ is an analytic family of compact operators for $-\pi/8 < \arg \lambda < 7\pi/8$. Then
\[ \lambda \mapsto ( I + \sqrt{V}R_\epsilon(\lambda)\sqrt{V} )^{-1},\quad -\pi/8 < \arg\lambda < 7\pi/8, \]
is a meromorphic family of operators, since $ I + \sqrt{V}R_\epsilon(\lambda)\sqrt{V} $ is invertible for $\pi/4 < \arg\lambda <\pi/2$, $|\lambda|\gg 1$ by \eqref{Reps bound}. In this range of $\lambda$, $I + V R_\epsilon(\lambda)$ is also invertible by the Neumann series argument, thus we have
\begin{equation}
\label{eqn:newmerocont}
\begin{split}
    (P_\epsilon - \lambda^2 )^{-1} & = R_\epsilon(\lambda) ( I +  V R_\epsilon(\lambda) )^{-1}  \\
    & = R_\epsilon(\lambda)\sum_{j=0}^\infty (-1)^j ( V R_\epsilon(\lambda) )^j  \\
    & = R_\epsilon(\lambda) ( I - \sqrt{V} \,\sum_{j=0}^\infty (-1)^j (\sqrt{V} R_\epsilon(\lambda) \sqrt{V} )^j\, \sqrt{V} R_\epsilon(\lambda) )  \\
    & = R_\epsilon(\lambda) [\, I - \sqrt{V} ( I + \sqrt{V} R_\epsilon(\lambda) \sqrt{V} )^{-1} \sqrt{V} R_\epsilon(\lambda) \,].
\end{split}
\end{equation}
Since both sides of \eqref{eqn:newmerocont} are meromorphic for $-\pi/8 < \arg\lambda <7\pi/8 $, by meromorphy, we conclude that \eqref{eqn:newmerocont} holds for all $-\pi/8 < \arg\lambda <7\pi/8 $, as an identity between meromorphic families of operators.

To obtain the multiplicity formula, we fix any $\lambda$ with $-\pi/8 < \arg\lambda <7\pi/8$, then there exists a neighborhood $\lambda\in U$ in this half plane and finite rank operators $A_j$, $1\leq j\leq J$ such that $ (  I + \sqrt{V}R_\epsilon(\zeta) \sqrt{V} )^{-1} - \sum_{j=1}^J \frac{A_j}{(\zeta - \lambda)^j}$ is analytic in $ \zeta \in U$.
Let $\mathcal{C}_\lambda \subset U$ be a positively oriented circle enclosing $ \lambda $
and containing no poles of $(  I + \sqrt{V}R_\epsilon(\zeta) \sqrt{V} )^{-1}$ other than possibly $ \lambda $, thus it also contains no poles of $( \zeta^2 - P_\epsilon )^{-1}$ other than possibly $ \lambda $ as a consequence of \eqref{eqn:newmerocont}. On the one hand, we can compute
\begin{equation}
\label{eqn:multproof1}
\begin{split}
    m_{\epsilon} ( \lambda ) & = \frac{ 1}{ 2 \pi i}\tr\int_{\mathcal{C}_\lambda}   ( I + \sqrt{V}R_\epsilon(\zeta) \sqrt{V} )^{-1} \sqrt{V}R_\epsilon(\zeta)^2 \sqrt{V} \,2\zeta d\zeta \\
    & = \frac{ 1}{ 2 \pi i}\tr\int_{\mathcal{C}_\lambda} \sum_{j=1}^J \frac{A_j \sqrt{V} R_\epsilon(\zeta)^2 2\zeta \sqrt{V}}{(\zeta - \lambda)^j} d\zeta \\
    & = \sum_{j=1}^J \sum_{k=0}^{j-1} \frac{1}{k!(j-1-k)!} \tr A_j \sqrt{V} \, \partial_\zeta^k R_\epsilon(\zeta)\,
    \partial_\zeta^{j-1-k} (R_\epsilon(\zeta)2\zeta) \sqrt{V}.
\end{split}
\end{equation}
On the other hand, by \eqref{eqn:newmerocont}, we have
\begin{equation}
\label{eqn:multproof2}
\begin{split}
    { }&\quad\  \frac{1}{ 2 \pi i } \tr \oint_\lambda ( \zeta^2 - P_\epsilon) )^{-1} 2\zeta d\zeta  \\
    & = \frac{ 1}{ 2 \pi i}\tr\int_{\mathcal{C}_\lambda} \sum_{j=1}^J \frac{R_\epsilon(\zeta) 2\zeta \sqrt{V} A_j \sqrt{V} R_\epsilon(\zeta) }{(\zeta - \lambda)^j} d\zeta \\
    & = \sum_{j=1}^J \sum_{k=0}^{j-1} \frac{1}{k!(j-1-k)!} \tr \partial_\zeta^{j-1-k} (R_\epsilon(\zeta)2\zeta) \sqrt{V} A_j \sqrt{V} \, \partial_\zeta^k R_\epsilon(\zeta) .
\end{split}
\end{equation}
Now we compare \eqref{eqn:multproof1} and \eqref{eqn:multproof2}, since each $A_j$ has finite rank, we can apply cyclicity of the trace to obtain the multiplicity formula \eqref{eqn:mult 2}.
\end{proof}

\section{Proof of convergence}
\label{poc}

The proof of convergence is based on Lemma \ref{lem:meroCont}, Lemma \ref{lem:newmeroCont}, with an application of the Gohberg--Sigal--Rouch\'e theorem, see Gohberg--Sigal \cite{gohberg1971operator} and \cite[Appendix C.]{res}.

We now state a more precise version of Theorem \ref{thm:1} involving the multiplicities given in \eqref{resonance mult} and \eqref{eqn:mult 1} as follows:
\begin{thm}
\label{thm:2}
For any $\Omega$ given in \eqref{rectangle Omega}, there exists $\delta_0=\delta_0(\Omega)$ satisfying the following: for any $0<\delta<\delta_0$, there exists $\epsilon_\delta > 0$ such that for any  $\lambda\in\Omega$ with $m(\lambda)>0$,
\[ \#\, \{\lambda_j(\epsilon)\}_{j=1}^\infty \cap B(\lambda,\delta) = m(\lambda),\quad\textrm{for all }0<\epsilon<\epsilon_\delta, \]
where $\{\lambda_j(\epsilon)\}_{j=1}^\infty$ given in \eqref{PepsEigenvalues} is counted with multiplicity, $B(\lambda,\delta):=\{z\in\CC:|z-\lambda|<\delta\}$.
\end{thm}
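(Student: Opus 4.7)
The approach is to compare the two meromorphic Fredholm operator pencils
\[ A(\lambda) := I + \sqrt{V}\,R_0(\lambda)\,\sqrt{V}, \qquad B_\epsilon(\lambda) := I + \sqrt{V}\,R_\epsilon(\lambda)\,\sqrt{V} \]
on $L^2(\RR^n)$, provided by Lemma \ref{lem:meroCont} and Lemma \ref{lem:newmeroCont} respectively, whose characteristic values encode the resonances with multiplicity \eqref{resonance mult} on one side and the eigenvalues $\lambda_j(\epsilon)^2$ with multiplicity \eqref{eqn:mult 2} on the other. The scheme is to invoke the Gohberg--Sigal--Rouch\'e theorem from \cite[Appendix C]{res}: fix a resonance $\lambda_0 \in \Omega$ with $m(\lambda_0) > 0$ and choose $\delta_0 = \delta_0(\Omega) > 0$ small enough that for any $0 < \delta < \delta_0$ the closed disk $\overline{B(\lambda_0, \delta)} \subset \Omega$ encloses $\lambda_0$ as the only pole of $A^{-1}$ (resonances are discrete by Lemma \ref{lem:meroCont}). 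Then $C_0(\delta) := \sup_{\lambda \in \partial B(\lambda_0, \delta)}\|A(\lambda)^{-1}\|_{L^2 \to L^2} < \infty$, and once $\sup_{\lambda \in \partial B(\lambda_0,\delta)}\|B_\epsilon(\lambda) - A(\lambda)\|_{L^2 \to L^2} < 1/C_0(\delta)$ the Gohberg--Sigal--Rouch\'e theorem forces $A$ and $B_\epsilon$ to have equal numbers of characteristic values (with multiplicity) in $B(\lambda_0,\delta)$. By the Remark following Lemma \ref{lem:newmeroCont} that common count is precisely $\#\{\lambda_j(\epsilon)\} \cap B(\lambda_0, \delta) = m(\lambda_0)$.

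The crux reduces to the uniform operator norm convergence
\[ \sup_{\lambda \in \overline\Omega}\bigl\|\sqrt{V}\,R_\epsilon(\lambda)\,\sqrt{V} - \sqrt{V}\,R_0(\lambda)\,\sqrt{V}\bigr\|_{L^2 \to L^2} \longrightarrow 0, \qquad \epsilon \to 0+. \]
Since \eqref{V decay} makes $\sqrt{V}\,e^{\gamma|x|}$ a bounded multiplier, it is enough to show that $e^{-\gamma|x|}(R_\epsilon - R_0)e^{-\gamma|x|} \to 0$ in operator norm, uniformly on $\overline\Omega$. For this I would transport the problem to the Fourier deformation side of Section \ref{davies}: repeating the argument that led to \eqref{eqn:Bfg-ibeta} with $\epsilon = 0$ — for which $Q_{0,-i\beta}$ reduces to multiplication by $\phi_{-i\beta}(\xi)^2$ — gives
\[ \langle e^{-\gamma|x|} R_0(\lambda)\,e^{-\gamma|x|} f, g\rangle = \langle (\phi_{-i\beta}^2 - \lambda^2)^{-1}\,U_{-i\beta}F_\gamma,\, U_{i\beta}G_\gamma\rangle,\quad \lambda \in \Omega, \]
so the matter boils down to comparing $(Q_{\epsilon,-i\beta} - \lambda^2)^{-1}$ with $(\phi_{-i\beta}^2 - \lambda^2)^{-1}$ in $L^2 \to L^2$ norm. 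The resolvent identity rewrites the difference as
\[ (Q_{\epsilon,-i\beta} - \lambda^2)^{-1}\bigl(\phi_{-i\beta}^2 - Q_{\epsilon,-i\beta}\bigr)(\phi_{-i\beta}^2 - \lambda^2)^{-1}, \]
whose middle factor, by \eqref{eqn:rewrite_Qepstheta}, equals $\epsilon$ times a second-order differential operator with $\CIb$-coefficients. The ellipticity \eqref{eqn:qbeta elliptic} together with its trivial $\epsilon = 0$ counterpart for the multiplication symbol $\phi_{-i\beta}^2 - \lambda^2$ upgrades to uniform estimates against the order function $m = 1 + \xi^2 + {\xi^\ast}^2$ via the semiclassical elliptic theory of \cite[\S 4.7]{Zw}; in particular both resolvents gain two Fourier-derivatives uniformly in $\epsilon$ and $\lambda \in \overline\Omega$, which lets the $O(\epsilon)$ second-order middle factor be absorbed, yielding operator-norm bound of size $O(\epsilon)$.

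The main obstacle is exactly this last step: upgrading the $L^2 \to L^2$ boundedness already contained in \eqref{eqn:Qeps-ibeta resolvent bound} to an $L^2 \to L^2(m\,d\xi)$ estimate (equivalently, a weighted $H^2$-type bound) for both $(Q_{\epsilon,-i\beta}-\lambda^2)^{-1}$ and $(\phi_{-i\beta}^2-\lambda^2)^{-1}$, uniformly in $\epsilon$ small and $\lambda \in \overline\Omega$. This is what allows the two Fourier derivatives in $\phi_{-i\beta}^2 - Q_{\epsilon,-i\beta}$ to be controlled. Once this uniform $O(\epsilon)$ convergence is in place, the Gohberg--Sigal--Rouch\'e step outlined above delivers Theorem \ref{thm:2}; Theorem \ref{thm:1} then follows by letting $\delta \to 0+$.
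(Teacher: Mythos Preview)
Your overall scheme --- reduce to the operator pencils $A(\lambda)=I+\sqrt V\,R_0(\lambda)\sqrt V$ and $B_\epsilon(\lambda)=I+\sqrt V\,R_\epsilon(\lambda)\sqrt V$ and apply the Gohberg--Sigal--Rouch\'e theorem on small circles around each resonance --- is exactly what the paper does. The divergence is in how you justify the key input $\sqrt V\,(R_\epsilon-R_0)\,\sqrt V\to 0$ in $L^2\to L^2$ uniformly on $\overline\Omega$, and here your argument has a genuine gap.

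Your claim that ``both resolvents gain two Fourier-derivatives uniformly in $\epsilon$'' is false on the $\epsilon=0$ side: $(\phi_{-i\beta}^2-\lambda^2)^{-1}$ is a \emph{pure multiplication} operator in $\xi$ and produces no decay in $\xi^\ast$ whatsoever. Its symbol lies in $S(\langle\xi\rangle^{-2})$, not in $S(1/m)$ with $m=1+\xi^2+{\xi^\ast}^2$, and it certainly does not map $L^2$ to any $H^2_\xi$-type space. On the $\epsilon>0$ side, semiclassical ellipticity \eqref{eqn:qbeta elliptic} with $h=\sqrt\epsilon$ gives uniform control only of $(hD_\xi)^2(Q_{\epsilon,-i\beta}-\lambda^2)^{-1}=\epsilon D_\xi^2(Q_{\epsilon,-i\beta}-\lambda^2)^{-1}$; since the middle factor $\phi_{-i\beta}^2-Q_{\epsilon,-i\beta}$ has precisely this leading part $i\epsilon J^{kj}J^{lj}D_{\xi_k}D_{\xi_l}$, your resolvent-identity composition is uniformly \emph{bounded} but carries no residual small factor. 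In short, the semiclassical gain is exactly consumed by the perturbation, and neither factorisation of the resolvent identity yields $O(\epsilon)$. (A repair is possible: the vectors $U_{\pm i\beta}F_\gamma$, $U_{\pm i\beta}G_\gamma$ inherit $H^k_\xi$-regularity with $C_k\|f\|,C_k\|g\|$ bounds from differentiating the Schur kernel $K(\xi,\eta;\theta)$, so the two $\xi$-derivatives could be thrown onto them; but that is a different argument from the one you outlined.)

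The paper sidesteps this difficulty with a spatial cutoff decomposition rather than a Fourier-side resolvent identity. It inserts $\chi_R(x)=\chi(x/R)$ and splits $\sqrt V(R_\epsilon-R_0)\sqrt V$ into (i) two tail terms $\sqrt V R_\bullet\sqrt V-\chi_R\sqrt V R_\bullet\chi_R\sqrt V$ for $\bullet\in\{0,\epsilon\}$, each bounded by $Ce^{-(\gamma-\gamma')R/2}$ using only Lemma~\ref{lem:weighted Davies resolvent estimate} and \eqref{weighted R0 bounds}; and (ii) a compactly cut-off piece $\sqrt V\chi_R(R_\epsilon-R_0)\chi_R\sqrt V=i\epsilon\sqrt V\chi_R R_\epsilon x^2 R_0\chi_R\sqrt V$, for which the complex-scaling estimate of \cite[\S 5]{Zw-vis} gives $\|\chi_R R_\epsilon x^2 R_0\chi_R\|\le C(\Omega,\chi_R)$ uniformly in $\epsilon$, hence an $O(\epsilon)$ bound. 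One first fixes $R$ large, then takes $\epsilon$ small. Thus the paper trades your direct comparison on the deformed side for a reduction to the already-established compactly-supported case.
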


\begin{proof}
In view of Lemma \ref{lem:meroCont}, the poles of $( I + \sqrt{V}R_0(\lambda)\sqrt{V} )^{-1}$ are isolated in the region $\{\lambda\in\CC : \Re\lambda>0,\Im\lambda>-\gamma\}$, thus there are finitely many $\lambda\in\Omega$ with $m(\lambda)>0$, denoted by $\lambda_1,\ldots,\lambda_J$. We choose $\delta_0 > 0$ such that $B(\lambda_j,\delta_0)$, $j=1,\ldots,J$ are disjoint discs in $\Omega$, then for any fixed $0<\delta<\delta_0$ and each $\lambda\in\Omega$ with $m(\lambda)>0$, we have
\[ \|( I + \sqrt{V}R_0(\zeta)\sqrt{V} )^{-1} \|_{L^2\to L^2} < C(\delta),\quad \forall\,\zeta\in\partial B(\lambda,\delta), \]
for some constant $C(\delta)>0$.

In order to apply the Gohberg--Sigal--Rouch\'e theorem, we need to estimate :
\[ \| I + \sqrt{V}R_\epsilon(\zeta)\sqrt{V} - ( I + \sqrt{V}R_0(\zeta)\sqrt{V} ) \|_{L^2\to L^2},\quad\textrm{for any }\zeta\in \Omega. \]
1. Choose $\chi\in \CIc(\RR^n)$ satisfying $\chi\equiv 1$ in $B_{\RR^n}(0,1)$ and $\supp\chi\subset B_{\RR^n}(0,2)$, here $B_{\RR^n}(0,r) := \{ x\in\RR^n : |x|< r \}$, we define $\chi_R (x) = \chi(R^{-1} x)$ and calculate:
\begin{equation}
\label{eqn:poc1}
    \begin{split}
    { }&\quad\  I + \sqrt{V}R_\epsilon(\zeta)\sqrt{V} - ( I + \sqrt{V}R_0(\zeta)\sqrt{V} ) \\
    & = \sqrt{V}R_\epsilon(\zeta)\sqrt{V} - \chi_R\sqrt{V}R_\epsilon(\zeta)\chi_R\sqrt{V} + \sqrt{V}\chi_R(R_\epsilon(\zeta)-R_0(\zeta))\chi_R\sqrt{V} \\
    & \quad - (\sqrt{V}R_0(\zeta)\sqrt{V} - \chi_R\sqrt{V}R_0(\zeta)\chi_R\sqrt{V}) .
    \end{split}
\end{equation}
2. The first term can be written as $(1-\chi_R)\sqrt{V}R_\epsilon(\zeta)\sqrt{V} + \chi_R\sqrt{V}R_\epsilon(\zeta)(1-\chi_R)\sqrt{V}$. Let $\Tilde\gamma=(\gamma + \gamma')/2$, then
\[ (1-\chi_R)\sqrt{V}R_\epsilon(\zeta)\sqrt{V} = (1-\chi_R)\sqrt{V}e^{\Tilde{\gamma}|x|}(e^{-\Tilde{\gamma}|x|}R_\epsilon(\zeta))e^{-\Tilde{\gamma}|x|})\sqrt{V}e^{\Tilde{\gamma}|x|}, \]
where $|\sqrt{V(x)}e^{\Tilde{\gamma}|x|}|\leq C e^{(\Tilde{\gamma}-\gamma)|x|} = C e^{-(\gamma-\gamma')|x|/2}$. By Lemma \ref{lem:weighted Davies resolvent estimate}, there exists $\epsilon_0=\epsilon_0(\Omega,\Tilde{\gamma})$ such that for any $0<\epsilon<\epsilon_0$, $\|e^{-\Tilde{\gamma}|x|}R_\epsilon(\zeta))e^{-\Tilde{\gamma}|x|}\|_{L^2\to L^2}\leq C(\Omega,\Tilde{\gamma}) $. Thus,
\[ \|(1-\chi_R)\sqrt{V}R_\epsilon(\zeta)\sqrt{V} \|_{L^2\to L^2} \leq C(\Omega,\gamma)e^{-(\gamma-\gamma')R/2},\quad\textrm{for any }0<\epsilon<\epsilon_0 .\]
Similarly, we can bound $\|\chi_R\sqrt{V}R_\epsilon(\zeta)(1-\chi_R)\sqrt{V}\|_{L^2\to L^2}$ by the right side above. Hence for any $0<\epsilon<\epsilon_0$,
\begin{equation}
\label{estimate1:poc}
    \|\sqrt{V}R_\epsilon(\zeta)\sqrt{V} - \chi_R\sqrt{V}R_\epsilon(\zeta)\chi_R\sqrt{V}\|_{L^2\to L^2}\leq Ce^{-(\gamma-\gamma')R/2},\quad\forall\,\zeta\in\Omega.
\end{equation}
3. We can estimate the third term in \eqref{eqn:poc1} by a similar argument. \eqref{weighted R0 bounds} implies that
\[ \| e^{-\Tilde{\gamma}|x|}R_0(\zeta)e^{-\Tilde{\gamma}|x|} \|_{L^2\to L^2} \leq C(\Omega,\gamma),\quad\forall\,\zeta\in\Omega. \]
Hence, arguing as above, we obtain that
\begin{equation}
\label{estimate2:poc}
    \|\sqrt{V}R_0(\zeta)\sqrt{V} - \chi_R\sqrt{V}R_0(\zeta)\chi_R\sqrt{V}\|_{L^2\to L^2}\leq Ce^{-(\gamma-\gamma')R/2},\quad\forall\,\zeta\in\Omega.
\end{equation}
4. We note that
\[ \chi_R (R_\epsilon(\zeta)-R_0(\zeta)) \chi_R = i\epsilon\, \chi_R (-\Delta -i\epsilon x^2 -\zeta^2)^{-1} x^2 (\Delta-\zeta^2)^{-1} \chi_R , \]
and recall \cite{Zw-vis} that there exists $C=C(\Omega,\chi_R)$ (independent of $\epsilon$) such that
\[ \|\chi_R (-\Delta -i\epsilon x^2 -\zeta^2)^{-1} x^2 (\Delta-\zeta^2)^{-1} \chi_R\|_{L^2\to L^2}\leq C,\quad\forall\,\zeta\in\Omega,\,\epsilon>0, \]
which is proved using the method of complex scaling, see \cite[\S 5]{Zw-vis} for details. Hence
\begin{equation}
\label{estimate3:poc}
    \|\sqrt{V}\chi_R(R_\epsilon(\zeta)-R_0(\zeta))\chi_R\sqrt{V}\|_{L^2\to L^2} \leq C(\Omega,\chi_R)\,\epsilon,\quad\forall\,\zeta\in\Omega,\,\epsilon>0.
\end{equation}
By \eqref{estimate1:poc} and \eqref{estimate2:poc}, we can first fix $R$ sufficiently large such that
\[
    \|\sqrt{V}R_\epsilon(\zeta)\sqrt{V} - \chi_R\sqrt{V}R_\epsilon(\zeta)\chi_R\sqrt{V}\|_{L^2\to L^2}\leq 1/(3C(\delta)),\quad\forall\,\zeta\in\Omega,\ 0\leq \epsilon<\epsilon_0.
\]
Then by \eqref{estimate3:poc}, there exists $\epsilon_\delta>0$ such that for all $0<\epsilon<\epsilon_\delta$,
\[ \|\sqrt{V}\chi_R(R_\epsilon(\zeta)-R_0(\zeta))\chi_R\sqrt{V}\|_{L^2\to L^2} \leq 1/(3C(\delta)),\quad\forall\,\zeta\in\Omega. \]
We may assume that $\epsilon_\delta<\epsilon_0$, thus by \eqref{eqn:poc1}, we conclude that for each $0<\epsilon<\epsilon_\delta$,
\[ \| ( I + \sqrt{V}R_0(\zeta)\sqrt{V} )^{-1} (\, I + \sqrt{V}R_\epsilon(\zeta)\sqrt{V} - ( I + \sqrt{V}R_0(\zeta)\sqrt{V} )\,) \|_{L^2\to L^2} < 1,
\]
on $\partial B(\lambda,\delta)$.

Now we apply the Gohberg--Sigal--Rouch\'e theorem to obtain that
\[
\begin{split}
    m(\lambda) & = \frac{1}{2\pi i} \tr \int_{\partial B(\lambda,\delta)} ( I + \sqrt{V}R_0(\zeta)\sqrt{V} )^{-1} \partial_\zeta(\sqrt{V}R_0(\zeta)\sqrt{V})\,d\zeta \\
    & = \frac{1}{2\pi i} \tr \int_{\partial B(\lambda,\delta)} ( I + \sqrt{V}R_\epsilon(\zeta)\sqrt{V} )^{-1} \partial_\zeta(\sqrt{V}R_\epsilon(\zeta)\sqrt{V})\,d\zeta,
\end{split}
\]
for each $0<\epsilon<\epsilon_\delta$. Let $\lambda_1(\epsilon),\ldots,\lambda_K(\epsilon)$ be the distinct poles of $( I + \sqrt{V}R_\epsilon(\zeta)\sqrt{V} )^{-1}$ in $B(\lambda,\delta)$, then
\[ m(\lambda) = \sum_{k=1}^K \frac{1}{2\pi i} \tr \oint_{\lambda_k(\epsilon)} ( I + \sqrt{V}R_\epsilon(\zeta)\sqrt{V} )^{-1} \partial_\zeta(\sqrt{V}R_\epsilon(\zeta)\sqrt{V})\,d\zeta = \sum_{k=1}^K m_\epsilon(\lambda_k(\epsilon)), \]
Therefore, with Lemma \ref{lem:newmeroCont} and \eqref{eqn:mult 2}, we obtain that
\[ \#\,\{\lambda_j(\epsilon)\}_{j=1}^\infty \cap B(\lambda,\delta) = m(\lambda),\quad\forall\,0<\epsilon<\epsilon_\delta, \]
which completes the proof.
\end{proof}

%

\begin{acknowledgments}
The author would like to thank
Maciej Zworski for helpful discussions. I am also grateful to the anonymous referee for the careful reading
of the first version and for the valuable comments. This project was supported in part by the National Science Foundation grant 1500852.
\end{acknowledgments}

\medskip
\noindent
{\bf DATA AVAILABILITY STATEMENT} \\
The data that supports the findings of this study are available within the article.


\begin{thebibliography}{10}

\bibitem{RiMe}
UV~Riss and H-D Meyer.
\newblock Reflection-free complex absorbing potentials.
\newblock {\em Journal of Physics B: Atomic, Molecular and Optical Physics},
  28:1475, (1995).

\bibitem{semi}
Tamar Seideman and William~H Miller.
\newblock Calculation of the cumulative reaction probability via a discrete
  variable representation with absorbing boundary conditions.
\newblock {\em The Journal of chemical physics}, 96:4412, (1992).

\bibitem{Jag}
Thomas-C Jagau, Dmitry Zuev, Ksenia~B Bravaya, Evgeny Epifanovsky, and Anna~I
  Krylov.
\newblock A fresh look at resonances and complex absorbing potentials: Density
  matrix-based approach.
\newblock {\em The journal of physical chemistry letters}, 5:310, (2014).

\bibitem{Zw-vis}
Maciej Zworski.
\newblock Scattering resonances as viscosity limits.
\newblock In {\em Algebraic and Analytic Microlocal Analysis}, page 635.
  Springer, (2013).

\bibitem{xiong2020}
Haoren Xiong.
\newblock Resonances as viscosity limits for exterior dilation analytic
  potentials.
\newblock {\em arXiv preprint arXiv:2002.12490}, (2020).

\bibitem{hunziker1986}
Walter Hunziker.
\newblock Distortion analyticity and molecular resonance curves.
\newblock In {\em Annales de l'IHP Physique th{\'e}orique}, volume~45, page
  339, (1986).

\bibitem{sjostrand1991}
Johannes Sj{\"o}strand and Maciej Zworski.
\newblock Complex scaling and the distribution of scattering poles.
\newblock {\em Journal of the American Mathematical Society}, 4:729, (1991).

\bibitem{nakamura1990}
Shu Nakamura.
\newblock Distortion analyticity for two-body schr{\"o}dinger operators.
\newblock In {\em Annales de l'IHP Physique th{\'e}orique}, volume~53, page
  149, (1990).

\bibitem{Nakamura}
Kentaro Kameoka and Shu Nakamura.
\newblock Resonances and viscosity limit for the wigner-von neumann type
  hamiltonian.
\newblock {\em arXiv preprint arXiv:2003.07001}, (2020).

\bibitem{froese}
Richard Froese.
\newblock Upper bounds for the resonance counting function of schr{\"o}dinger
  operators in odd dimensions.
\newblock {\em Canadian Journal of Mathematics}, 50:538, (1998).

\bibitem{gannot}
Oran Gannot.
\newblock From quasimodes to resonances: exponentially decaying perturbations.
\newblock {\em Pacific Journal of Mathematics}, 277:77, (2015).

\bibitem{res}
Semyon Dyatlov and Maciej Zworski.
\newblock {\em Mathematical theory of scattering resonances}, volume 200.
\newblock American Mathematical Soc., (2019).

\bibitem{Dav}
E~Brian Davies.
\newblock Pseudo--spectra, the harmonic oscillator and complex resonances.
\newblock {\em Proceedings of the Royal Society of London. Series A:
  Mathematical, Physical and Engineering Sciences}, 455:585, (1999).

\bibitem{HSV}
Michael Hitrik, Johannes Sj{\"o}strand, and Joe Viola.
\newblock Resolvent estimates for elliptic quadratic differential operators.
\newblock {\em Analysis \& PDE}, 6:181, (2013).

\bibitem{Zw}
Maciej Zworski.
\newblock {\em Semiclassical analysis}, volume 138.
\newblock American Mathematical Soc., (2012).

\bibitem{gohberg1971operator}
IC~U. Gohberg and E.~I. Sigal.
\newblock An operator generalization of the logarithmic residue theorem and the
  theorem of rouch{\'e}.
\newblock {\em Mathematics of the USSR-Sbornik}, 13:603, (1971).

\end{thebibliography}

\end{document}